\newtheorem{theorem}{Theorem}[section]
\newtheorem{lemma}[theorem]{Lemma}
\newtheorem{proposition}[theorem]{Proposition}
\newtheorem{corollary}[theorem]{Corollary}
\newtheorem{remark}[theorem]{Remark}
\newtheorem{example}[theorem]{Example}
\theoremstyle{break}
\newtheorem{corollarybr}[theorem]{Corollary}
\newenvironment{proof}{\textbf{Proof.}~}{\hfill$\square$}
\newenvironment{vd}{\noindent\color{blue} VD }{}
\newenvironment{jl}{\noindent\color{red} JL }{}
\newcommand{\ov}[1]{\overline{#1}}
\newcommand{\abs}[1]{\left|\mathinner{#1}\right|}
\newcommand{\Abs}[1]{\left\Vert\mathinner{#1}\right\Vert}
\newcommand{\gen}[1]{\left< \mathinner{#1} \right>}
\newcommand{\set}[2]{\left\{\, \mathinner{#1}\vphantom{#2}\; \left|\; \vphantom{#1}\mathinner{#2} \right.\,\right\}}
\newcommand{\oneset}[1]{\left\{\, \mathinner{#1} \,\right\}}
\newcommand{\N}{\mathbb{N}}
\newcommand{\Z}{\mathbb{Z}}
\newcommand{\Oh}{\mathcal{O}}
\newcommand{\IFF}{if and only if\xspace}
\newcommand{\homo}{homomorphism\xspace}
\renewcommand{\phi}{\varphi}
\newcommand{\eps}{\varepsilon}
\newcommand{\alp}{\alpha}
\newcommand{\bet}{\beta}
\newcommand{\gam}{\gamma}
\newcommand{\del}{\delta}
\newcommand\RAS[2]{\overset{#1}{\underset{#2}{\Longrightarrow}}}
\newcommand\ra{\longrightarrow}
\newcommand\LAS[2]{\overset{#1}{\underset{#2}{\Longleftarrow}}}
\newcommand\DAS[2]{\overset{#1}{\underset{#2}{\Longleftrightarrow}}}
\newcommand{\Breduced}{Britton-reduced\xspace}
\newcommand{\Breduction}{Britton reduction\xspace}
\newcommand{\geo}{geo\-desic\xspace}
\newcommand{\BSpq}{\mathrm{\bf{BS}}(p,q)}
\newcommand{\llex}{length-lexi\-co\-graphi\-cal\xspace}
\newcommand{\llexnf}{length-lexi\-co\-graphi\-cal nor\-mal form\xspace}
\newcommand{\sv}{standard valley\xspace}
\newcommand{\confree}{context-free\xspace}
\renewcommand{\ll}{\mathrel{\leq_{\mathrm{ll}}}}
\newcommand{\lldel}{\mathrel{\leq_{\Delta}}}
\newcommand{\llnf}[1]{\mathop{\mathrm{llnf}}(#1)}
\newcommand{\di}{difficult\xspace}
\newcommand\Bpnf{Britton peak normal form\xspace}
\newcommand\peaknf{peak normal form\xspace}
\newcommand{\pnf}[1]{\mathop{\mathrm{pnf}}(#1)}
\newcommand{\sh}{semi-horocyclic\xspace}
\renewcommand{\t}{\mathrm{time}}
\newcommand{\T}{\mathrm{Time}}
\newcommand{\FP}{\Z * \oneset{t,T}^*}
\newcommand{\gq}{\sim}
\newcommand{\sq}{=}
\newcounter{BStmpa}
\newcounter{BStmpb}
\newcounter{BStmpc}
\newcounter{BSdx}
\newcounter{BSdy}
\newcounter{BSheight}
\newcounter{BSx}
\newcounter{BSy}
\definecolor{BSgray}{rgb}{0.9,0.9,0.9}
\definecolor{BSblack}{rgb}{0.0,0.0,0.0}
\newenvironment{BS}[2]
{
	\setlength{\unitlength}{1mm}
	\setcounter{BSdx}{5*#1+10}
	\setcounter{BSdy}{10*#2+14}
	\setcounter{BSheight}{#2}
	\begin{picture}(\value{BSdx},\value{BSdy})
	\color{BSgray}
	\setcounter{BStmpa}{7}
	\loop
		\ifnum\value{BStmpa}<\value{BSdy}
		\put(0,\value{BStmpa}){\line(1,0){\value{BSdx}}}
		\addtocounter{BStmpa}{10}
	\repeat
	\put(0,0){\line(1,0){\value{BSdx}}}
	\color{BSblack}
	\setcounter{BSx}{5}
	\setcounter{BSy}{7}
}
{
	\thinlines
	\put(0,0){\framebox(\value{BSdx},\value{BSdy}){}}
	\end{picture}
}
\newcommand{\BSt}
{
	\put(\value{BSx},\value{BSy}){\line(1,2){5}}
	\addtocounter{BSx}{5}
	\addtocounter{BSy}{10}
}
\newcommand{\BST}
{
	\put(\value{BSx},\value{BSy}){\line(1,-2){5}}
	\addtocounter{BSx}{5}
	\addtocounter{BSy}{-10}
}
\newcommand{\BSa}[1]
{
	\setcounter{BStmpa}{\value{BSx}-1}
	\setcounter{BStmpb}{\value{BSy}+2}
	\put(\value{BSx},\value{BSy}){\circle*{1.0}}
	\put(\value{BStmpa},\value{BStmpb}){\ensuremath{#1}}
}
\newcommand{\BSoff}[2]
{
	\addtocounter{BSx}{5*#1}
	\addtocounter{BSy}{10*#2}
}
\newcommand{\BSdesc}[1]
{
	\put(5,1.5){#1}
}
\newcommand{\BSheights}[1]
{
	\setcounter{BStmpa}{-#1}
	\setcounter{BStmpb}{6}
	\setcounter{BStmpc}{\value{BSheight}-#1+1}
	\loop
		\ifnum\value{BStmpa}<\value{BSheight}
		\put(1,\value{BStmpb}){\theBStmpa}
		\addtocounter{BStmpa}{1}
		\addtocounter{BStmpb}{10}
	\repeat
	\addtocounter{BSx}{3}
}
\begin{document}


\title{On Computing Geodesics\\in Baumslag-Solitar Groups}
\author{Volker Diekert \qquad J\"urn Laun \\[5mm]
  Universit{\"a}t Stuttgart, FMI \\
  Universit{\"a}tsstra{\ss}e 38 \\
  D-70569 Stuttgart, Germany \\[5mm]
  \texttt{$\{$diekert$,$laun$\}$@fmi.uni-stuttgart.de}}

\maketitle 

\abstract
{
We introduce the \emph{\peaknf}for 
elements of the 
Baumslag-Solitar groups $\BSpq$. This normal form is 
very close to the \llex normal form, but more symmetric. 
Both normal forms
are geodesic. This means the normal form of an element $u^{-1}v$ yields 
the shortest path between $u$ and $v$ in the Cayley graph.
For horocyclic elements the \peaknf and the \llexnf coincide. 
The main result of this paper is that we  can compute the \peaknf in polynomial time
if $p$ divides $q$. As consequence we can compute geodesic lengths
in this case. In particular, this gives a partial answer to Question~1 in \cite{elder09some}.

For arbitrary $p$ and $q$ it is possible to compute the \peaknf 
(\llexnf resp.)  also the for elements 
in the horocyclic subgroup and, more generally, for elements which we 
call \emph{hills}. This approach leads to a  linear time reduction 
of the problem of
computing geodesics to the problem of
computing geodesics for \Breduced words where the $t$-sequence 
starts with $t^{-1}$ and ends with $t$. 
}

\section{Introduction}\label{sec:intro}
Baumslag-Solitar groups were introduced in \cite{baumslag62some}
and they enjoy many remarkable properties, see e.g. Lyndon and Schupp \cite{ls77}.
The Baumslag-Solitar group $\BSpq$ is a  one-relator group defined by
\begin{equation*}
	\BSpq:=\langle a,t\ \vert\ ta^pt^{-1}=a^q\rangle.
\end{equation*}

The word problem is decidable in linear time, but it is still not known
how to compute the geodesic length of elements efficiently.
Polynomial time algorithms for 
this problem were known only for horcyclic elements, \cite{fredenadams09,fcs09} or in the case where 
$\BSpq$ is solvable, i.e., the case where $p=1$, see  \cite{elder09linear}. As usual, a horocyclic element is an element 
of the subgroup $\gen{a}$ generated by $a$. 
More precisely, the paper of Murray Elder \cite{elder09linear} 
presents a linear time 
algorithm how to compute \geo{}s for all words, when
$\BSpq$ is solvable. 

The result of Elder is the starting
point for our paper and we generalize  his result to the 
case where $p$ divides $q$, thereby giving  a partial answer to Question~1 in \cite{elder09some}.

For this purpose we introduce the notion of \emph{\peaknf.}
The \peaknf is geodesic. Thus, it represents an element by a
geodesic word in the Cayley graph of $\BSpq$.
There can be exponentially many different \geo{}s, 
the \peaknf chooses a unique one; and  it is defined by a natural 
condition. For horocyclic elements the \peaknf and 
\llexnf coincide. 

Our main result is Theorem~\ref{thm:main}; it states that 
{\peaknf}s can be computed in quadratic time, if $p$ divides $q$. Actually, we have a more 
precise result, which yields Elder's linear time
bound for $p=1$. 

Our technique relies on the 
fact  that 
\llex normal forms of horocyclic elements can be computed 
in linear time. See also \cite{elder09linear,fredenadams09,fcs09}
for similar approaches. 
We also  extend the linear time result to elements which we call 
\emph{hills}. These are words whose \Breduction 
can be written in the form
$$\bet_1 t \cdots \bet_k t \alp_0 t^{-1} \alp_1 \cdots \alp_\ell t^{-1}$$
where all $\alp_i$ and $\bet_j$ are horocyclic. Moreover, we give 
a linear time reduction of the problem of 
computing geodesics to that of
computing geodesics for \Breduced words where the $t$-sequence 
starts with $t^{-1}$ and ends with $t$. 

\section{Notation and preliminaries}
Throughout the paper, let $1\le p<q$ be fixed positive integers. 
By $\alp, \alp_i, \bet, \bet_i, \gamma, \gamma_i, \delta, \mu, \nu, 
\rho, \sigma, \tau$
we always mean integers. We reserve special fixed letters
$t$, $T$, $a$, and $A$. By  $\theta, \theta_i$ we mean
either $t$ or $T$. 

The alphabet $\oneset{t,T,a,A}$ is ordered 
by putting  $t<T< a < A$, and it is equipped with an involution
by $\ov a = A$, $\ov A = a$, $\ov t = T$, and $\ov T = t$. 

The involution is extended to words by 
$$ \ov{a_1 \cdots a_m} =  \ov{a_m} \cdots \ov{a_1} \text{ for } 
a_i \in \oneset{t,T,a,A}.$$
We read  $A= a^{-1}$ and $T= t^{-1}$ in the Baumslag-Solitar group:
\begin{equation*}
	\BSpq:=\langle a,t\ \vert\ ta^pt^{-1}=a^q\rangle.
\end{equation*}
Every word $w \in \oneset{t,T,a,A}^*$ can be read as $w \in \BSpq$ 
and, of course,  $\ov w = w^{-1}\in \BSpq$. 

The Baumslag-Solitar group $\BSpq$ is an HNN-extension of $\Z=\langle a\rangle$ with stable letter $t$, and the  HNN-extension
is defined by the canonical \homo between the subgroups 
$p\Z$ and $q\Z$ mapping the subgroup generator $p$ to $q$.
The  group $\BSpq$
can also be defined by a confluent and terminating string rewriting system 
$BS$
over the alphabet $\oneset{t,T,a,A}$
with the following rules, where $0$ denotes the empty word:
\begin{align*}
aA \ra 0& \quad \quad Aa \ra 0\\
tT \ra 0& \quad \quad Tt \ra 0\\
a^qt\ra t a^p & \quad \quad At\ra a^{q-1}t A^p  \\
 a^pT\ra  Ta^q & \quad \quad AT\ra a^{p-1}T A^q 
\end{align*}
Termination is not completely obvious, 
but the proof is standard with string rewriting techniques 
as explained e.g.~in the textbook \cite{jan88eatcs}. 

The 
rewriting system $BS$ defines a congruence relation 
$$\DAS*{BS} \subseteq \oneset{t,T,a,A}^* \times \oneset{t,T,a,A}^*.$$ 
This gives  normal forms by computing $w \RAS*{BS} \hat w$
such that $\hat w$ is irreducible. Since $BS$ is confluent, two words $u$ and $v$ are equal in 
$\BSpq$ \IFF $u\DAS*{BS}v$ \IFF there is some $\hat w$ with 
$u\RAS*{BS}\hat w\LAS*{BS}v$. Moreover, if $\hat w$ is irreducible, it is 
uniquely defined by $u$. 

In the following we write $u\gq v$ if $ u\DAS*S v$. Thus, 
$u=v$ means identity as strings, whereas $u\gq v$ means 
equality in $\BSpq$. 

The problem is that the length of $\hat w$ can be exponential
in the length of $w$. To see this, consider e.g. $p=1$ and the
word $t^n a T^n$. Its irreducible descendant has length $q^n$.

The purpose of the system $BS$ is therefore mainly to provide simple 
proofs  for all basic properties about Baumslag-Solitar groups, see
also \cite{ddm09} for more background about this approach. 

We investigate the problem of computing geodesics, i.e., given a word $w$ over the generators $a$ and $t$ and their inverses
$A= a^{-1}$ and $T= t^{-1}$, find a shortest word $g(w)$ over the  alphabet $\oneset{t,T,a,A}$
such that $w$ and $g(w)$ describe the same element of the group $\BSpq$. 
The \geo word $g(w)$ is not unique, but its length $\abs{g(w)} \in \N$ 
is well-defined. It is called the \emph{\geo length} of $w$. 

There might be exponentially many different geodesic words $g(w)$ for a word $w$. So we are interested 
in unique normal forms as well. 
The \emph{\llex} linear order is defined
on words $u$ and $v$ by letting $u\ll v$ if either $\abs{u}<\abs{v}$ 
or first $\abs{u}=\abs{v}$ and second $u$ is not 
behind $v$ in the lexicographical order, 
which is defined by $t<T<a<A$. 

The \emph{\llex}normal form of a word $w$
is  denoted by $\llnf{w}$. It is the first word $v$
in this order satisfying $v\gq w$. Obviously, $\llnf{w}$ is \geo. 
Later we will introduce another (and more symmetric)
\geo normal form: the \peaknf.

Confluence and termination of the rewriting system $BS$ imply that the word problem for $\BSpq$ is solvable. Therefore computing \geo{}s (or any decidable \geo normal form) is possible in finite (though using this na\"ive approach at least exponential) time. On input $w$ just enumerate all words up to length $\abs{w}$ and check if they are equivalent to $w$.

Throughout, we identify the word $a^\alpha$ with the integer $\alp$, and we identify
$A^\alpha$ with $-\alp$. As a consequence,  the free product 
$\FP$ of $\Z$ with the free monoid 
$\oneset{t,T}^*$ becomes both a quotient monoid 
(by $a\mapsto +1$ and $A\mapsto -1$) and at the same time 
a subset of $\oneset{t,T,a,A}^*$, where
$0$ denotes the empty word. For elements of this subset $\FP$ we use normal 
forms. In other words $u \in \FP$ is  always 
represented as  a sequence which alternates between 
integers and elements from $\oneset{t,T}^+$.

More precisely, we identify $\FP$ with the set of sequences
of words which alternate between words in $\oneset{a}^* \cup \oneset{A}^*$
and letters in $\oneset{t,T}$.
Thus, $(42,t,t,3,T,t)$ is allowed, but  $(43,-1,t,t,3,T,t)$
is not, although they denote the same element in $\FP$. 
(This means $\FP$ becomes the regular subset of $\oneset{t,T,a,A}^*$, 
which is defined by forbidding factors $aA$ and $Aa$.)

Note that mapping a word $w\in \oneset{t,T,a,A}^*$ 
to the corresponding word $u \in \FP$ does not increase
the length. 
It is also clear that all geodesic words in $\oneset{t,T,a,A}^*$ belong to the 
subset $\FP\subseteq \oneset{t,T,a,A}^*$.

Every word $u\in \FP$ can uniquely be written  as a sequence 
$$u = \alp_0\theta_1\alp_1\cdots \theta_k\alp_k$$
with $k \geq 0$, $\alp_i \in \Z$, and $\theta_i \in \oneset{t,T}$.
Its length is $$\abs{u} = k + \sum_{i=0}^k \abs{\alp_i}.$$

Let $\Abs{\alp}$ denote the geodesic length of an integer $\alp$. 
We define the \emph{norm} $\Abs{u}$ of $u$ by
$$\Abs{u} =   k + \sum_{i=0}^k \Abs{\alp_i}.$$
Let $\abs{g(u)}$ be the \geo length of $u$. Then we get
$$\abs{g(u)} \leq \Abs{u} = k + \sum_{i=0}^k \Abs{\alp_i}
 \leq \abs {u} = k + \sum_{i=0}^k \abs{\alp_i},
$$
having equalities when $u$ is \geo. 

The objective is therefore to compute on input $u$ a word $v$ which 
minimizes $\Abs{v}$ among all words $v$ with  $u \gq v$. 
In the next section we show that, given a word, we can compute
an equivalent Britton-reduced word in polynomial time. Thus, in order
to compute \geo{}s we only have to consider Britton-reduced words.

Observe  that $\Abs{\alp} = \abs{\alp}$ for an integer $\alp$ implies $\abs{\alp} < 3q$,
and as soon as  $\abs{\alp} \geq 2q$, we find a geodesic using 
letters $t$ (and $T)$. This is trivial, because $\mu q \gq t\mu pT$. 
It is also well-known (and shown in Section~\ref{sec:ghw})
that, if there exists some geodesic using the 
letter $t$, then there is some geodesic $g_1(\alp)$ which starts with 
the letter $t$, and there is some  geodesic $g_2(\alp)$ which ends with 
the letter $T$. 

As a consequence, 
let $w$ be a word and let  $u = \alp_0\theta_1\alp_1\cdots \theta_k\alp_k$ minimize
$\Abs{u}$ for all $u \gq w$.
Consider 
some $0\leq i <k$ where $\theta_{i+1} = t$.
No geodesic of $\alp_i$ can end in a $T$, so 
$\abs{g(\alp_i)} = \abs{\alp_i} < 2q$. 
As we have $(\pm q)t \gq t(\pm p)$ we see that actually
$\abs { \alp_i} < q$. The same happens if $1 \leq j \leq k$  
and
$\theta_{j}= T$, then  $\abs { \alp_j} < q$, too. 

Thus, large values $\abs { \alp_i}$ can be found at local peaks 
of $u$, only. A \emph{local peak} of $u$ is a position
$i \in \oneset{0, \ldots, k}$ such that 
$0\leq i \leq k$ and  $\theta_{i} \neq T$  and $\theta_{i+1} \neq t$.
Note that $0$ is a local peak for $k =0$. For $k >0$ it is 
a local peak if $\theta_1= T$. 

The notion originates from the shape of the path of $u$, 
if reading a $T$ means going one step downwards and
reading a $t$ means going one step upwards, as illustrated
by the following figure for the word $u=ATa^2taTattA^{10}TaTa^2tta^{42}T$:

\begin{center}
\begin{BS}{10}{2}
	\BSdesc{\text{A word with four local peaks}}
	\BSoff{0}{1}
	\BSa{-1}\BST\BSa{2}\BSt\BSa{1}\BST\BSa{1}\BSt\BSt\BSa{-10}\BST\BSa{1}\BST\BSa{2}\BSt\BSt\BSa{42}\BST\BSa{}
\end{BS}
\end{center}

As integers $\alp_i$ with large absolute value $\abs { \alp_i}$ can only be found at local peaks 
of $u$, the number of local peaks is an important
parameter. However, for technical reasons which will not become apparent until later, 
we prefer to count the number of sinks. A sink is dual to a local peak. Formally, 
a \emph{sink} of a word $u = \alp_0\theta_1\alp_1\cdots \theta_k\alp_k$
is a position
$i \in \oneset{0, \ldots, k}$ such that 
$\theta_{i} \neq t$  and $\theta_{i+1} \neq T$.
Note that $0$ is a sink for $k =0$. For $k >0$ it is 
a sink if $\theta_1= t$. The difference between the number of local peaks and the
number of  sinks is bounded by $\pm 1$. 
For example, if $p=1$ (which means that $\BSpq$ is solvable)
\Breduced words may  have 1 or 2 local peaks, but always exactly 1 sink. 
Here are the pictures for words having one sink and one or two local peaks: 

\begin{center}
\begin{BS}{3}{1}
	\BSdesc{\text{$a^2ta^{13}$}}
	\BSoff{1}{0}
	\BSa{2}\BSt\BSa{13}
\end{BS}
\begin{BS}{3}{1}
	\BSdesc{\text{$ATa^2ta^{42}$}}
	\BSoff{0}{1}
	\BSa{-1}\BST\BSa{2}\BSt\BSa{42}
\end{BS}
\end{center}
 
We emphasize this is in a special notation: 
Let $u = \alp_0\theta_1\alp_1\cdots \theta_k\alp_k$ 
be a word. The number of sinks is denoted by $s(u)$ which is defined as 
\begin{equation}\label{eq:lp}
s(u) = \abs{\set{i \in \oneset{0, \ldots, k}}{ i \text{ is a sink}}}.
\end{equation}

\section{\Breduction{}s}
Let $u = \alp_0\theta_1\alp_1\cdots \theta_k\alp_k$ be  a word  in $\FP$. 
A \emph{\Breduction step} means
replacing in $u = \alp_0\theta_1\alp_1\cdots \theta_k\alp_k$ either 
a factor $t \mu p T$ by $ \mu q$ or 
a factor $T \mu q t$ by $ \mu p$, where $\mu \in \Z$. 
After that, we rewrite the new sequence by the corresponding 
word without factors $aA$ and $Aa$ in the free
product $\FP$. So a \Breduction step decreases the number of letters from $\oneset{t,T}$ 
but it may increase the length. If e.g. $\theta_i \alp_i \theta_{i+1} = t \mu p T$, then 
one \Breduction step yields 
$$u_1 = \alp_0\theta_1\alp_1\cdots \theta_{i-1} (\alp_{i-1} 
+ \mu q + \alp_{i+1}) \theta_{i+2} \alp_{i+2}\cdots \theta_k\alp_k.$$
A \Breduction may increase the length by a factor $q/p$, 
but the important point is 
that $\Abs{u_1} \leq \Abs{u}$. Indeed the geodesic length
of $\alp_{i-1} + \mu q + \alp_{i+1}$ is at most 
$\abs{\alp_{i-1}t\mu pT\alp_{i+1}}.$
A \Breduction neither increases the number of local peaks nor the number of sinks.  

A word is \emph{\Breduced,} if no 
\Breduction step is possible. 
As the coefficients 
$\alp_i$ can increase exponentially, from now on we keep
all integers in binary notation. The number of bits remains 
linear in the input length because the sum over $\log_2(\abs{\alp_i})$ is never greater than $q\abs w$. For complexity considerations it is convenient 
to assume that  an arithmetic operation on $n$-bit integers (with $n \in \Oh(\abs w)$) 
can be performed in constant time.  
Then each step in a \Breduction 
needs constant time. So we can produce a \Breduced equivalent
word (with 
integers written in binary) in 
linear time. The number 
of bit operations is actually not worse than quadratic 
(if polylog terms are ignored). 

Britton-reduced forms are far from being unique. On the contrary, there can be exponentially  many 
\Breduced equivalent words. But if 
$w\gq u = \alp_0\theta_1\alp_1\cdots \theta_k\alp_k$ and $u$ is \Breduced,
 then the \emph{$t$-sequence} 
$$(\theta_1, \ldots \theta_k) \in \oneset{t,T}^*$$
is uniquely determined by the word $w$. 
(This can easily be seen with the help of the rewriting 
system $BS$ defined above since applying its rules leaves a 
\Breduced word \Breduced.)

Thus, as a \Breduction never increases the 
norm, we can assume that the input word 
$u = \alp_0\theta_1\alp_1\cdots \theta_k\alp_k$ is Britton 
reduced and the integers $\alp_i$ are written in binary. 
The objective is therefore reduced to minimizing the norm $\Abs{u}$ 
(and to computing geodesic normal forms) for \Breduced words and to computing \geo{}s of horocyclic elements. 

As usual, a word $u\in \oneset{t,T,a,A}^*$ is called  
\emph{horocyclic}, if its image in $\BSpq$ belongs to 
the horocyclic subgroup $\gen{a}$ generated by $a$. We adopt this notion
here. Thus,  a \Breduced word
$u = \alp_0\theta_1\alp_1\cdots \theta_k\alp_k$ is horocyclic
\IFF $k = 0$. As a consequence, membership of 
the horocyclic subgroup can be tested in linear time. 

The next step is to compute geodesics of horocyclic words.

\section{Geodesics for horocyclic words}\label{sec:ghw}
Techniques for computing \geo{}s  for horocyclic words
can also be found elsewhere, see \cite{elder09linear,fredenadams09,fcs09}. Corollary~\ref{allesreg} and
Remark~\ref{rem:ft} have been inspired  
by recent results by Freden et al., see \cite[Prop.~9.1]{fcs09}.
Since our presentation is however quite different from other approaches 
we give full proofs.

The strategy to compute \llex normal forms for 
horocyclic words $w \gq \alp \in \Z$
relies on the following simple observations:
\begin{enumerate}[1.)]
\item  Horocyclic elements commute in $\BSpq$.
\item If $\abs{\alp}$ is small, then we can
use table lookup. 
\item If $\abs{\alp}$ is large, then $\llnf{w}$ begins with $t$, see Lemma~\ref{danksprung}.
\item If  $w \gq t u Tv$, where the indicated letters $t$ and $T$ match in 
a \Breduction, then $v \gq \bet\in \Z$ and $\alp \equiv \bet \bmod q$.
\item  If $w = uw'v$ and  $\llnf{w} = uw''v$, then $\llnf{w'} = w''$.
\end{enumerate}

We begin with a lemma which might be of independent interest.
\begin{lemma}\label{danksprung}
Let $w \gq \alp$ be horocyclic and  $g(w)$ be a 
\geo representation. Then we can write:
$$ g(w) = \bet_{k}t \cdots \bet_1 t\alp_0T\alp_1\cdots T\alp_k.$$
Moreover, there are also \geo representations $g_1(w)$ 
and $g_2(w)$ such that
\begin{align*}
g_1(w) = t \cdots  t\alp_0T(\alp_1 + \bet_1) \cdots T(\alp_k + \bet_{k})\\
g_2(w) = (\alp_k + \bet_{k})t \cdots  (\alp_1 + \bet_1)t\alp_0T \cdots T
\end{align*}
We also find such geodesic representations with $k\geq 1$  as soon as 
$\abs \alp \geq 2q$. 
\end{lemma}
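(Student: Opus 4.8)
The plan is to reduce the whole statement to a single structural fact: \emph{a geodesic of a horocyclic element has $t$-sequence $t^kT^k$}. Since a geodesic lies in $\FP$, write $g(w)=\gam_0\theta_1\gam_1\cdots\theta_m\gam_m$ and record the height $h_j$ after the $j$-th letter ($h_0=0$, a letter $t$ raising and a letter $T$ lowering the height by $1$). Applying the homomorphism $\BSpq\to\Z$ that kills $a$ and sends $t\mapsto 1$ gives $h_m=0$, so the $t$-sequence has equally many letters $t$ and $T$. Granting the structural fact, relabelling $\bet_j:=\gam_{k-j}$ for $1\le j\le k$ and $\alp_l:=\gam_{k+l}$ for $0\le l\le k$ puts $g(w)$ in the asserted form, with $k=0$ being the horocyclic case $g(w)=a^\alpha$.

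To prove the structural fact I would rule out a \emph{valley}, i.e.\ a factor $\theta_i\theta_{i+1}=Tt$; without one, all $t$'s precede all $T$'s and the $t$-sequence is $t^kT^k$. So suppose $g(w)=u\,a^{\gam_{i-1}}Ta^{\gam_i}ta^{\gam_{i+1}}\,v$ has a valley at position $i$. If $q\mid\gam_i$, a \Breduction step replaces $Ta^{\gam_i}t$ by $a^{\gam_i p/q}$; doing this and freely reducing turns the factor $a^{\gam_{i-1}}Ta^{\gam_i}ta^{\gam_{i+1}}$ into $a^{\gam_{i-1}+\gam_i p/q+\gam_{i+1}}$, whose norm is at most $\abs{\gam_{i-1}}+\abs{\gam_i}p/q+\abs{\gam_{i+1}}<\abs{\gam_{i-1}}+\abs{\gam_i}+\abs{\gam_{i+1}}+2$ (as $p<q$), so $\Abs{g(w)}$ drops, contradicting that $g(w)$ is geodesic. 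If $q\nmid\gam_i$, I would look at the valley together with the two local peaks of the height function flanking it and, passing to $\ov{g(w)}$ if necessary (a geodesic of $a^{-\alpha}$, still with valleys as valleys), assume the descent from the left peak down to the valley floor is no longer than the subsequent ascent. Then fold that descent onto the bottom part of the ascent: horocyclicity of $g(w)$ forces, via Britton's lemma, each peak integer to become divisible by $p$ once it is innermost, so the innermost $ta^{(\cdot)}T$ always collapses to an $a$-power and the integers sitting at equal heights on the two sides of the fold combine by addition; the net effect is a word $g'\gq a^\alpha$ with at least one fewer $t$, one fewer $T$, and $\Abs{g'}\le\Abs{g(w)}$ (adding integers does not increase $\abs{\cdot}$), again contradicting geodesicity. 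Hence there is no valley. This folding argument is the step I expect to be delicate: one has to make precise that the folded layers combine cleanly and that nothing in the word gets longer.

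With the shape secured, write $g(w)=a^{\bet_k}t\cdots a^{\bet_1}ta^{\alp_0}Ta^{\alp_1}\cdots Ta^{\alp_k}$. When $k\ge1$, since $g(w)\gq a^\alpha$ and no \Breduced representative of a horocyclic element has a nonempty $t$-sequence, some pinch must be present; in a mountain word the only candidate is the innermost $ta^{\alp_0}T$, forcing $p\mid\alp_0$. Removing it gives a mountain of height $k-1$, and iterating yields the cascade $p\mid m_1,\ p\mid m_2,\dots$, where $m_1=\bet_1+(\alp_0/p)q+\alp_1$ and so on. I would then define $g_1:=t^k a^{\alp_0}Ta^{\alp_1+\bet_1}T\cdots Ta^{\alp_k+\bet_k}$ and $g_2:=a^{\alp_k+\bet_k}t\cdots a^{\alp_1+\bet_1}ta^{\alp_0}T^k$, and run the same cascade ($ta^{\alp_0}T=a^{(\alp_0/p)q}$, then $ta^{m_1}T=a^{(m_1/p)q}$, \dots) to check $g_1\gq g_2\gq g(w)\gq a^\alpha$; since $g_1$ and $g_2$ use exactly as many letters $t,T$ as $g(w)$ and $\Abs{\cdot}$ is subadditive on $\Z$ (so $\Abs{\alp_j+\bet_j}\le\Abs{\alp_j}+\Abs{\bet_j}$), one gets $\Abs{g_1},\Abs{g_2}\le\Abs{g(w)}=\abs{g(w)}$, hence (after expanding their integers geodesically) $g_1$ and $g_2$ are geodesic representations of $w$. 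Finally, the last sentence is exactly the observation from the preliminaries that $\abs\alpha\ge2q$ forces a geodesic of $a^\alpha$ using the letter $t$ (since $\mu q\gq t\mu pT$), i.e.\ $k\ge1$.
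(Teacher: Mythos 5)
Your opening reduction (balanced $t$-sequence via the homomorphism to $\Z$), your treatment of a valley with $q\mid\gamma_i$, the divisibility cascade for mountain words, and the construction of $g_1,g_2$ by shifting the $\beta_j$ across and invoking the triangle inequality are all sound, and the last sentence is indeed just $a^{2q}\gq ta^{2p}T$. The genuine gap is exactly where you flagged it, and it is not merely a matter of delicacy: the case $q\nmid\gamma_i$ does not succumb to the folding surgery as described. When $q\nmid\gamma_i$ the factor $Ta^{\gamma_i}t$ is Britton-irreducible, so the local word $a^{\gamma_{i-1}}Ta^{\gamma_i}ta^{\gamma_{i+1}}$ around the valley is \emph{not} equal in $\BSpq$ to any $a$-power, and ``combining the integers at equal heights by addition'' changes the group element (already $a^{\mu}Ta^{\lambda}ta^{\nu}\ne a^{\mu+\nu+f(\lambda)}$ for any $f$ when $q\nmid\lambda$). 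Britton's lemma only guarantees a pinch \emph{somewhere}; when the valley is not a pinch, every pinch is a peak $ta^{p\mu}T$, and collapsing such a peak replaces $p\mu$ by $q\mu$, which \emph{lengthens} the word --- so following the reduction also cannot produce the shorter word $g'$ you need. Your argument as written never exhibits a word equivalent to $g(w)$ of smaller norm in this case.

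The way out --- and this is what the paper does --- is to prove the shape statement and the $g_1/g_2$ statement \emph{simultaneously} by induction on $k$, rather than sequentially. If the offending valley is outermost, write $g(w)=Tut$; since the full $t$-sequence must vanish under Britton reduction, $u$ is itself horocyclic, and it is geodesic (else $g(w)$ would not be). The induction hypothesis then supplies the representation $g_1(u)$ of the same length beginning with the letter $t$, so $Tg_1(u)t$ freely cancels to a strictly shorter equivalent word --- contradiction; the base case $k=1$ is $T\mu qt\gq\mu p$. Note that this $g_1$-rearrangement is precisely the ``integers at matching heights combine by addition'' phenomenon you are reaching for, but it is only valid once the inner word is already known to be a mountain with the divisibility cascade; your linear organization (first kill all valleys by local surgery, then derive $g_1,g_2$) is therefore circular at this step. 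Interior valleys are absorbed by the other branch of the induction, where $g(w)$ factors as a product of shorter horocyclic words.
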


\begin{proof} 
The shape $ \alp \gq g(w) = \bet_{k}\theta'_k \cdots \bet_1 \theta'_1\alp_0\theta_1\alp_1\cdots \theta_k\alp_k$ is obvious because
in the \Breduction all $t$ and $T$ must vanish. 
Clearly, $a^{2q} \gq ta^{2p}T$ and $A^{2q} \gq tA^{2p}T$, so 
the assertions are  trivial for $k=0$. Let 
$k \geq 1$.   

\begin{enumerate}[1.)]
\item Let $g(w) = uv$ be the product of shorter horocyclic words.
If both $u$ and $v$ have \geo representations  using the letter $t$
(or $T$), then we can write $g(w) \gq g_2(u)g_1(v)$ with $\abs{g(w)} = \abs{g_2(u)g_1(v)}$.
But $g_2(u)g_1(v)$
is not geodesic, because the first letter of 
$g_1(v)$ is $t$ and the last letter of 
$g_2(u)$ is $T$. Since $g(w)  \gq vu \gq uv$, we may assume that
$v = \alp_k$. The claims follow by induction, because elements of $\Z$, such as $\alp_k$ and $\bet_k$ commute with horocyclic elements.
\item If $g(w)$ is not the product of shorter horocyclic words, then we have $\bet_k = \alp_k = 0$ and either 
$g(w) = Tut$ or $g(w) = tuT$.  If $g(w) = tuT$, then we are done by induction.
So assume by contradiction $g(w) = Tut$.
\begin{enumerate}[i.)]
\item  Let $k \geq 2$ and $g(w) = Tut$. This is  impossible,
because $k \geq 2$, therefore $g(w) \gq Tg_1(u)t$, and $g_1(u)$ 
has $t$ as its first letter. 
\item Let $k=1$ and $g(w) = T\alp t$. Then we have 
$\alp \in q\Z$, so $g(w) = T\mu q t$ for some $\mu\in\Z$. But $T\mu q t \gq \mu p$ and 
$\abs {\mu p} < \abs {\mu q} +2$. 
Thus we find a contradiction again.
\end{enumerate}
\end{enumerate}
\end{proof}

\begin{example}\label{ex:expo}
Let $p=1$ and $q=2$. Our techniques will show that
$t^ka(Ta)^k$ is a \geo representation. The word is obviously 
horocyclic.
It is also clear that Lemma~\ref{danksprung} can be generalized such 
that in this case we find exactly $2^k$ \geo representations:
$$(\prod_{i=1}^k a^{\eps_i}t) \; a \; (\prod_{i=1}^k T a^{1-\eps_i})
\text{ where } \eps_i\in \oneset{0,1}.$$
\end{example}

The first phase in our computation to produce the \llex normal form for horocyclic elements will  a greedy linear time reduction to
compute the \llex normal form of \Breduced words where all 
$\theta_i$ are equal to $T$. 
For this purpose we introduce the following notion.
A \emph{slope} is a word $w$ which has the 
form $$w= \alp_0T\alp_1\cdots T\alp_k.$$

\begin{center}
\begin{BS}{9}{4}
	\BSdesc{\text{The slope $a^{42}TA^{3}Ta^5TTA^{1}$}}
	\BSoff{2}{4}
	\BSa{42}\BST\BSa{-3}\BST\BSa{5}\BST\BST\BSa{-1}
\end{BS}
\end{center}

In \cite{elder09linear} slopes are called \emph{words of type N}. 

\begin{proposition}\label{fritzchen}
Let $w\in \oneset{t,T,a,A}^*$ be a horocyclic 
word 
with  $w \gq \alp \in \Z$. 
Then we can compute 
in linear time  a slope $u$ and a natural number $\ell \in \Theta(\log \abs \alp)$
such that 
$$\llnf{w} = t^\ell \llnf{u}$$ 
\end{proposition}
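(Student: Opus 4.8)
The plan is to follow the five observations listed before Lemma~\ref{danksprung}, turning them into an explicit greedy procedure that peels off leading $t$'s. First I would dispose of the small case: by the discussion in Section~2, if $\abs\alp < 2q$ then $\Abs\alp = \abs\alp$ is achieved without any $t$, and Lemma~\ref{danksprung} tells us that $\llnf w$ need not use the letter $t$; here $\ell = 0$ and $u = \alp$ is already a slope, computable by table lookup in constant time. So assume $\abs\alp \geq 2q$. By Lemma~\ref{danksprung} we then know $\llnf w = t^{\ell} v$ for some $\ell \geq 1$ and some word $v$, and moreover $\llnf w$ has a \geo representation of the form $g_1(w) = t\cdots t\,\alp_0 T\alp_1\cdots T\alp_k$, i.e.\ after the block of leading $t$'s everything is a slope. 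The content to prove is that $v$ is itself of the form $\llnf u$ for a slope $u$ and that $\ell$ and $u$ can be read off in linear time.

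The key step is the recursion. Write $\alp = \mu q + \rho$ with $\rho$ the representative of $\alp \bmod q$ chosen so that $\abs\rho$ is minimized (this is a constant-time arithmetic operation). Since $\alp \gq t(\mu p) T \cdot \rho$ in $\BSpq$ — using $t a^{\mu p} T \gq a^{\mu q}$ and that $\Z$ is abelian — and since $\abs\alp \geq 2q$ forces a geodesic to begin with $t$, a geodesic of $\alp$ is obtained as $t$ followed by a geodesic of the horocyclic element $\mu p$, followed by the fixed tail $T a^{\rho}$ (more precisely $T\rho$ in our sequence notation). By observation 5.) (the substitution property of \llnf on subwords) together with observation 4.) — which pins down that whatever sits between the matched $t$ and the trailing $T$ must be congruent to $\alp$ mod $q$, hence is exactly our $\mu p$ up to a multiple of $q$ that the \llnf{} will not use — we get
$$\llnf w = t \cdot \llnf{(\mu p)} \cdot T\rho.$$
Now iterate: replace $\alp$ by $\mu p$ and repeat. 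Each iteration either terminates (when the current value drops below $2q$ in absolute value, by Lemma~\ref{danksprung} no further $t$ is used) or strips one more $t$ and appends one more block $T\rho_i$ to the right. Since $p < q$, the value is multiplied by $p/q < 1$ at each step, so after $\ell = \Theta(\log\abs\alp)$ iterations we stop; concatenating the accumulated tail blocks $T\rho_1 \cdots T\rho_\ell$ together with the final sub-$2q$ value $\alp_0$ in front gives the slope $u = \alp_0 T\rho_\ell \cdots T\rho_1$ with $\llnf w = t^\ell \llnf u$. Each iteration is one division with remainder, hence constant time under the arithmetic-cost convention, and there are $\Theta(\log\abs\alp) \subseteq \Oh(\abs w)$ of them, so the whole reduction is linear time.

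The main obstacle is the middle claim — justifying rigorously that the \llnf{} respects this greedy peeling, i.e.\ that $\llnf w = t\cdot\llnf{(\mu p)}\cdot T\rho$ rather than something that mixes the representative of $\alp \bmod q$ differently, or uses a $T$ earlier, or balances the exponent across the first $T$ in a cleverer way. Observation 4.) handles the ``$\equiv \alp \bmod q$'' constraint on the inner part; the point requiring care is that among all valid choices of that inner integer, the \llex{}-minimal completion forces the specific representative $\rho$ with $\abs\rho$ minimal in the \emph{tail} position, and forces $\mu p$ exactly in the inner position. This is where observation 1.) (commutativity of horocyclic elements, letting us slide the slope tail past the recursion) and observation 5.) do the real work, and one should also invoke the ``geodesic starting with $t$'' half of Lemma~\ref{danksprung} to know the leading block is a pure power $t^\ell$ with nothing interleaved. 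Everything else — the base case, the $\Theta(\log\abs\alp)$ bound from $p<q$, and the linear-time bookkeeping — is routine.
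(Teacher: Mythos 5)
Your overall architecture (greedy peeling of leading $t$'s, one division with remainder per step, $\Theta(\log\abs\alp)$ iterations, linear time) is the same as the paper's. But the step you yourself flag as ``the main obstacle'' is not just unproved --- it is false as stated. You claim $\llnf{w}=t\cdot\llnf{\mu p}\cdot T\rho$ where $\rho$ is the residue of $\alp$ modulo $q$ of minimal absolute value. Take $p=1$, $q=3$, $\alp=5$: your rule gives $\rho=-1$, $\mu=2$, hence $t\,\llnf{2}\,T(-1)=ta^2TA$, whereas $\llnf{5}=taTa^2$ (residue $2$, see the paper's appendix table); both have length $5$, but $taTa^2\ll ta^2TA$ since $T<a$. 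The other natural convention $0\le\bet<q$ fails as well: for $\alp=8$ it predicts $t\,\llnf{2}\,T2=ta^2Ta^2$, but $\llnf{8}=t^2aT^2A$. There is no local rule that picks the correct residue at each step, because the \llex{}-optimal tail coefficient depends on the normal form of the entire inner value --- and the normal form of the greedily produced slope may even acquire \emph{additional} leading $t$'s (this is exactly why Corollary~\ref{wiesiehtsieaus} only asserts $k-\ell\in\Oh(1)$ rather than $k=\ell$). So your recursion does not compute $\llnf{w}$, and since you derive both the exponent $\ell$ and the slope's role from that recursion, the argument collapses.

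The fix is to prove something weaker, which is all the proposition needs: the greedy procedure produces \emph{some} slope $u$ with $t^\ell u\gq w$, and one shows only that $\llnf{w}$ \emph{begins with} $t^\ell$. From these two facts $\llnf{w}=t^\ell\llnf{u}$ follows by your observation 5.), with the actual tail coefficients recomputed afterwards by Proposition~\ref{fritz} --- they need not be your $\rho_i$. The missing ingredient, for which observations 1.), 4.), 5.) are no substitute, is the monotonicity induction the paper uses: if $\alp_i\le\alp'$ (where $\alp_i$ is the value represented after $i$ greedy steps), then $\llnf{\alp'}$ begins with $t^i$; this is proved by writing $\alp'=\mu'q+\bet'_{i'}$ via Lemma~\ref{danksprung}, observing $\mu_i\le\mu'$, and applying the induction hypothesis to $p\mu'$. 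Without an argument of this kind you have not established that $\llnf{w}$ has at least $\ell$ leading $t$'s, which is the entire content of the proposition beyond bookkeeping.
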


\begin{proof}
We begin with some precomputational steps.  
\begin{enumerate}[1.)]
\item  Replace 
$w$ by the corresponding word 
$u = \del_0\theta_1\del_1\cdots \theta_m\del_m$
in the free product $\FP$.
\item Compute 
a \Breduced 
equivalent word $u' = \del'_0\theta'_1\del'_1\cdots \theta'_{\ell'}\del_{\ell'}$
keeping integers in binary representation. 
If ${\ell'}\neq 0$, then $w$ was not horocyclic and we can stop.  
\item Now we may assume that $w= \alp\in \Z$.
\end{enumerate}

Next, we perform a greedy linear time algorithm. By symmetry we 
assume $0 \leq \alp\in \N$. 
The basic idea is that as long 
as $\alp \geq 2q$, the unary notation 
$a^\alp$  is not a {\llex}ly first representation,
because $ta^{2p}T<_{ll} a^{2q}$. This motivates  
rewriting  $\alp = q\mu + \bet$ with $0\leq \bet < q$. We can replace 
$\alp$ by $tp\mu T\bet$, and we have  $0 \leq  p\mu <  \alp$. 
Repeating this greedy process for $\mu p$ as long as 
$p \mu \geq 2q$ yields in linear time a representation 
$w\gq t^\ell\bet_0 T\bet_1 \cdots T \bet_\ell$ with $\ell \in \Theta(\log \abs \alp)$,
$0 \leq \bet_i < q$ for $i \neq 0$,  and $0 \leq  \bet_0 < 2q$.

Define $\alp_i= t^i \bet_0 T\bet_1 \cdots T \bet_i$. 
For each $i$ we found $\mu_i\ge 0$ such that 
$\alp_i= q\mu_i + \bet_i$ and
$$ t^i\bet_0 T\cdots  \bet_{i-1} T \gq p \mu_i.$$
We have 
$\alp = \alp_\ell$ and is enough to show that
the word 
$\llnf{\alp'}$ begins with $t^i$ for each $\alp'$ with $\alp_i \leq \alp'$. This is trivial for $i= 0$.
Now let $i \geq 1$. We know $2q\leq \alp_i \leq \alp'$, 
so by Lemma~\ref{danksprung} we obtain: 
$$\llnf{\alp'} = t^{i'} \gam_0 T \bet_1'\cdots T\bet_{i'}'$$
for some ${i'} \geq 1$. Moreover, $\abs{\bet_{i'}'} < q$.
We have
$\alp'\equiv \bet_{i'}' \bmod q$. Hence there exists $\mu'\ge 1$ with
$\alp' = \mu' q + \bet_{i'}'$
Recall that  $\alp_i = \mu_i q + \bet_i$ where $0\leq \bet_i< q$.
Therefore $\alp_i \leq \alp'$ implies $\mu_i \leq \mu'$. 
Induction applies, and we can conclude that $\llnf{p\mu'}$
begins with $t^{i-1}$. Thus, $\llnf{\alp'}$ begins with $t^{i}$.
\end{proof}

By Proposition~\ref{fritzchen} it is enough to 
compute \llex normal forms for slopes. Essentially, this can be performed by a finite transducer. Our algorithm relies on dynamic programming
and needs some preparation. 

Let $R$ denote the set (of constant size)
defined by  $$R=\set{\rho \in \Z}{\abs \rho \leq r},$$
where in this section the constant $r$ is the least 
positive integer such that
\begin{equation}\label{eq:fred}
r \geq  p \cdot \frac{r+q-1}{q} + q-1.
\end{equation}

Actually, minimizing the concrete value of $r$ 
is of little importance. All we need is 
that for all constants $c \geq  0$ there is a positive integer $r = r(c)$
such that 
\begin{equation}\label{eq:otto}
r \geq  p \cdot \frac{r+c}{q} +c.
\end{equation}

So, in Eq.~\ref{eq:fred} the constant $c$ was chosen to be $q-1$,
which guarantees in particular that $\abs{\bet} < q$ implies $\bet \in R$.

\begin{proposition}\label{fritz}
There is a linear time algorithm (where the number of 
bit-operations is linear, too) which  on input 
a slope $w= \bet_0 T\bet_1 \cdots T \bet_\ell$ with
\begin{equation}\label{eq:smallbetas}
\abs{\bet_i} < q\text{ for }i \neq 0\text{, and }\abs{ \bet_0} < 2q
\end{equation}
computes 
the \llex normal form $\llnf{w}$. Moreover, we have
$$\llnf{w}= t^k\bet T\alp_1 \cdots T \alp_{k+\ell}$$
with $k \in \Oh(1)$. 
\end{proposition}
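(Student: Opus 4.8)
The goal is to turn the linear‑time computation of $\llnf{w}$ for a slope satisfying the smallness hypothesis~\eqref{eq:smallbetas} into a dynamic programming computation over a bounded state space, using the set $R$ as the set of "remainders" that need to be carried along. The basic phenomenon, already visible in Proposition~\ref{fritzchen}, is that the normal form of a slope is itself essentially a slope with a bounded prefix $t^k$; so I would first argue that $\llnf{w}$ has the claimed shape $t^k\bet T\alp_1\cdots T\alp_{k+\ell}$ with $k\in\Oh(1)$, and only then explain how to compute the $\alp_i$.

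\textbf{Step 1: bounding $k$.}
I would read the slope $w=\bet_0 T\bet_1\cdots T\bet_\ell$ from the left. By Lemma~\ref{danksprung}, since $\llnf{w}$ is geodesic and horocyclic, it has the form $\bet'_m t\cdots\bet'_1 t\,\alp_0 T\alp_1\cdots T\alp_m$, and in fact (choosing the $g_1$‑representation there, or invoking that $\llnf{w}$ is the \llex‑least such word) it begins with some power $t^k$. The number of $T$'s in $\llnf{w}$ equals the number of $t$'s plus the length of the $t$‑sequence of $w$, which is the Britton‑reduced invariant; since $w$ has $\ell$ occurrences of $T$ and $\llnf{w}$ is Britton‑reduced with $t$‑sequence $t^k T^{k+\ell}$, the count $k+\ell$ of trailing $T$'s is forced. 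To bound $k$ itself: the hypothesis $\abs{\bet_i}<q$ (and $\abs{\bet_0}<2q$) makes $w$ already "almost geodesic", so applying a single $t$ can only help a bounded number of times. Concretely, I would compute the integer value of the various suffixes modulo $q$ and argue, as in the proof of Proposition~\ref{fritzchen}, that after $k=\Oh(1)$ leading $t$'s the remaining data lies in $R$ and no further $t$ is profitable — this is where inequality~\eqref{eq:fred}/\eqref{eq:otto} is used, with $c=q-1$ chosen so that $\abs\bet<q\Rightarrow\bet\in R$.

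\textbf{Step 2: dynamic programming for the $\alp_i$.}
Once the shape is fixed, computing the $\alp_i$ amounts to the following: writing $\llnf{w}\gq w$ and peeling off $T$'s from the right, at each position $j$ we must choose $\alp_j$ with $\alp_j\equiv(\text{current value})\bmod q$ minimizing $\Abs{\alp_j}$ plus the cost of propagating the carry $(\text{value}-\alp_j)/q$ leftward. The key point is that the carry always stays in the bounded set $R$: the estimate~\eqref{eq:fred} is exactly the fixed‑point inequality guaranteeing that if the incoming residual is bounded by $r$ then, after absorbing a coefficient of size $<q$ and dividing by $q\ge p\cdot(\text{num})/q$‑style scaling, the outgoing residual is again bounded by $r$. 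So the state of the transducer is an element of $R$ together with the bounded prefix information, both of constant size; a left‑to‑right (or right‑to‑left) sweep with $\Oh(1)$ work per position — each step a constant number of arithmetic operations on $\Oh(1)$‑bit numbers, hence $\Oh(1)$ bit operations — produces the coefficients. \llex‑minimality at each step is local because of observation~5.) in the list preceding Lemma~\ref{danksprung} (normal forms of factors are factors of the normal form) together with the fact that among all representatives of a fixed residue class mod $q$ the \llex‑least choice of coefficient is determined once the carry is fixed.

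\textbf{Main obstacle.}
The delicate part is not the DP recursion itself but verifying that a \emph{local, greedy} choice of each $\alp_i$ (minimizing norm given the forced residue and a bounded carry) actually yields the globally \llex‑least word, i.e.\ that there is no long‑range interaction forcing a locally suboptimal coefficient to be globally better. I expect this to be handled by an exchange argument: given any geodesic representative of the required shape, one shows that replacing a coefficient by the greedy choice (and adjusting the neighbouring carry, which remains in $R$ by~\eqref{eq:fred}) does not increase the norm and does not move the word later in \llex order, so iterating reaches the greedy word. Making the bookkeeping of carries precise — in particular checking that the carry genuinely never leaves $R$ across the whole sweep, including at the transition between the bounded $t^k$‑prefix and the slope body where $\abs{\bet_0}<2q$ rather than $<q$ — is the technical heart of the proof, and is precisely what the constant $r$ in~\eqref{eq:fred} is engineered to control.
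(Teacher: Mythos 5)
Your overall strategy --- dynamic programming with carries confined to the bounded set $R$ via inequality~(\ref{eq:fred}), and the $t^k$-prefix coming from the normal form of a bounded integer --- is the same as the paper's, but as written the argument has concrete gaps. In Step~1 you invoke Lemma~\ref{danksprung} for ``$\llnf{w}$ geodesic and horocyclic'': a slope with $\ell\ge 1$ is not horocyclic, so the lemma does not apply to $w$; moreover $\llnf{w}$ is \emph{not} \Breduced when $k\ge 1$ (the factor $t\bet T$ with $\bet\in p\Z$ is Britton-reducible), so you cannot read its $t$-sequence off the Britton invariant of $w$, which is $T^\ell$, not $t^kT^{k+\ell}$. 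The shape and the bound $k\in\Oh(1)$ instead fall out of the recursion itself: the leftmost factor of the computed word is $\llnf{\rho_0}$ for an integer with $\abs{\rho_0}\le r+q=\Oh(1)$, and \emph{that} is where Lemma~\ref{danksprung} and a precomputed table legitimately enter.

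More seriously, the ``main obstacle'' you flag --- justifying a local greedy choice of each $\alp_j$ by an exchange argument --- is left unresolved, and a genuinely greedy sweep is in fact incorrect: in the paper's appendix example ($p=1$, $q=3$) one has $\llnf{u(3,1)}=t2TTT{-1}T{-2}$, whose last coefficient is $-2$ even though $+1$ is the residue of smaller absolute value, so minimizing $\Abs{\alp_j}$ locally picks the wrong letter. No exchange argument is needed if you abandon greediness: maintain $\llnf{u(i,\gam)}$ for \emph{every} $\gam\in R$ simultaneously, where $u(i,\gam)=\bet_0T\cdots\bet_{i-1}T\gam$, and take the minimum over all transitions $\rho_{i+1}=\mu q+\gam$, $\rho_i=\bet_i+\mu p$; correctness is then immediate from observation~5.) together with the closure of $R$ under the carry map. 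Your write-up oscillates between this full DP and a single-state greedy sweep without committing, so the correctness proof is incomplete. Finally, the linear bit-operation bound does not follow from ``constant work per position'': the naive DP stores $\abs{R}$ words of length $\Theta(i)$ at stage $i$, which is quadratic overall. One needs the further device of keeping only the length-$\le 2r$ suffixes together with the induced linear order on the (equal-length) prefixes, and emitting a pointer structure from which $\llnf{w}$ is recovered by one final right-to-left scan; your ``bounded prefix information'' gestures at this but does not supply it.
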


Note that the slope $u$ given by Proposition~\ref{fritzchen} satisfies 
(\ref{eq:smallbetas}). Thus: 

\begin{corollary}\label{cor:horopolytime}
The \llexnf of horocyclic elements can be computed in linear time
(where the cost of arithmetic operations is considered to be constant). 
\end{corollary}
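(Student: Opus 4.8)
The plan is to obtain Corollary~\ref{cor:horopolytime} by chaining together Propositions~\ref{fritzchen} and~\ref{fritz}, with the linear-time horocyclicity test from the previous section as a preprocessing guard. Given an arbitrary word $w \in \oneset{t,T,a,A}^*$, I would first run the precomputational steps from the proof of Proposition~\ref{fritzchen}: rewrite $w$ as the corresponding word in $\FP$ and compute a \Breduced equivalent word in linear time, keeping all integers in binary. If the resulting $t$-sequence is nonempty, then $w$ is not horocyclic and the algorithm stops; otherwise $w \gq \alp$ for a single integer $\alp$ in binary notation of bit-length $\Oh(\abs w)$.

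Next I would feed this $\alp$ into the greedy procedure of Proposition~\ref{fritzchen}. In linear time this yields a slope $u = \bet_0 T \bet_1 \cdots T \bet_\ell$ together with a number $\ell \in \Theta(\log\abs\alp) \subseteq \Oh(\abs w)$ such that $\llnf{w} = t^\ell\,\llnf{u}$. Crucially, the greedy construction also produces the size bounds $\abs{\bet_i} < q$ for $i \neq 0$ and $\abs{\bet_0} < 2q$, i.e.\ exactly hypothesis~(\ref{eq:smallbetas}) of Proposition~\ref{fritz}. Hence Proposition~\ref{fritz} applies to $u$ and computes, again in linear time and with only linearly many bit operations, the \llex normal form $\llnf{u} = t^k \bet T \alp_1 \cdots T \alp_{k+\ell}$ with $k \in \Oh(1)$. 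The algorithm then outputs $t^\ell \cdot \llnf{u} = t^{\ell+k}\bet T\alp_1 \cdots T\alp_{k+\ell}$, which by Proposition~\ref{fritzchen} is precisely $\llnf{w}$.

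For the complexity bound I would simply add up the three stages: the preprocessing (word-in-$\FP$ conversion plus \Breduction) is linear, Proposition~\ref{fritzchen} is linear, and Proposition~\ref{fritz} is linear; under the convention that an arithmetic operation on $\Oh(\abs w)$-bit integers costs constant time this gives a linear-time algorithm overall. Writing down the final answer is cheap as well: the $t$-prefix has length $\ell + k \in \Oh(\log\abs\alp + 1) \subseteq \Oh(\abs w)$, and the tail $\bet T \alp_1 \cdots T \alp_{k+\ell}$ has length $\Oh(\abs u)$, both linear in $\abs w$.

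I do not expect a genuine obstacle here; the only thing that requires checking is the interface between the two propositions, namely that the slope delivered by Proposition~\ref{fritzchen} really satisfies~(\ref{eq:smallbetas}) and that the two $t$-prefix parameters $\ell$ and $k$ combine into a still-logarithmic total prefix length. Both are immediate from the statements as written, so the corollary is essentially a bookkeeping composition of the two main results of this section. If anything needs slightly more care it is being explicit that the intermediate integer $\alp$ (and the $\bet_i$) stay within an $\Oh(\abs w)$-bit budget so that the "constant-time arithmetic" convention is legitimately applied throughout; this was already arranged when we agreed to keep integers in binary and noted that $\sum_i \log_2\abs{\alp_i} \le q\abs w$.
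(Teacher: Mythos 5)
Your proposal is correct and follows exactly the paper's route: the corollary is obtained by observing that the slope produced by Proposition~\ref{fritzchen} satisfies hypothesis~(\ref{eq:smallbetas}) of Proposition~\ref{fritz}, and then composing the two linear-time procedures (the paper's own justification is precisely the one-line remark preceding the corollary). Your extra bookkeeping on bit-lengths and the combined $t$-prefix is harmless and consistent with the conventions already fixed in the paper.
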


\begin{proof}{(Proposition~\ref{fritz})}
The input $w= \bet_0 T\bet_1 \cdots T \bet_\ell$ is \Breduced.

We initialize a table with entries $\llnf \rho$ for 
all $\rho \in \Z$ with $\abs\rho \leq r+ q$ 
in constant time. The desired form of the entries is guaranteed by Lemma~\ref{danksprung}.
For $\ell = 0$ we find the information in this table because
$\abs{\bet_0} < 2q$.
Now, let $\ell > 0$.

For each $0 \leq i \leq \ell$ and each $\gamma \in R$ 
we define the word $u(i, \gamma)$
by $$u(i, \gamma) =\bet_0 T \cdots \bet_{i-1}T \gamma.$$
We show that we can compute $\llnf{u(i, \gamma)}$ efficiently by table lookup. Recall that 
$\bet_\ell\in R$.

If $i=0$ then $u(0, \gamma)= \gam$ is a small integer, and 
$\llnf \gam$
is in the precomputed table.

Let $i \geq 0$ and assume for simplicity 
(and for a moment) that for all $\gamma \in R$ the values $\llnf{u(i,\gamma)}$
have been computed and stored in a table of size $\abs R$. 
We wish to compute the \llex normal form of 
$u(i+1, \rho_{i+1}) = \bet_0 T \cdots \bet_{i}T \rho_{i+1}$
for each $\rho_{i+1} \in R$. The word
$\llnf{u(i+1, \rho_{i+1})}$
ends in some $T\gam$, where 
$\abs \gamma < q$ and $\rho_{i+1} \equiv \gam \bmod q$, because 
a slope is \Breduced. Thus, 
$ \rho_{i+1} = \mu q + \gam$ for some $\mu \in \Z$ and we obtain $T \rho_{i+1} \gq \mu p T \gamma$. 
Now we have $\abs{\rho_{i+1}}\leq r$. 
Hence for $i=0$ we obtain
$$\abs{\mu p + \bet_i} 
\leq p \cdot \frac{r+q-1}{q} + 2q-1 \leq r + q.$$

For $i > 0$ we obtain
$$ \abs{\mu p + \bet_i}  \leq p \cdot \frac{r+q-1}{q} + q-1 \leq r.$$
 
Thus, in both cases we can compute  by table lookup:
$$\llnf{u(i+1, \rho_{i+1})}= \min\oneset{\llnf{u(i, \rho_i)}T\gamma},$$
where the minimum is taken over all $\rho_i \in R$ and $\abs \gamma < q$ such that 
$$ \exists \mu: \rho_i= \bet_i + \mu p \wedge \rho_{i+1} = \mu q + \gamma.$$

So far we have proven Proposition~\ref{fritz} except for the linear time bound. In fact, we have shown that the \llexnf of slopes (and thus of horocyclic elements) can be computed in polynomial time. 
It remains to explain how we manage to find the minimum in constant
time with a constant amount of information.  
Unfortunately, the explanation is slightly  technical. 
If the reader is interested in polynomial 
time results only, he or she is invited to skip the rest of the proof. 
An example of how the method works can be found in the appendix. 

For the explanation
we observe first 
that $$\abs{\;\abs{\llnf{u(i, \rho)}} - \abs{\llnf{u(i, \tau)}}\; } \leq 2r$$ 
for all $\rho, \tau \in R$. One idea is therefore that 
instead of keeping the words $\llnf{u(i, \rho)}$ in the table, we store in it 
only certain suffixes of these words of maximal length $2r$. 

This would be even more evident if it were enough to compute the 
\geo length. Then the table would just need to store the length differences
$$\abs{g(u(i, \rho)) } - \abs{g(u(i,0)) } \in \oneset{-2r, \ldots, 2r}$$
in addition to the absolute \geo length of one of these elements, say, $\abs{g(u(i,0))}$. 

As we are more ambitious than simply computing geodesic lengths, we need more subtle data structures. 
More precisely, factorize each $\llnf{u(i, \rho)}$ 
as 
$$\llnf{u(i, \rho)} = p_{i,\rho}\cdot s_{i,\rho}$$
such that $\abs{p_{i,\rho}} = \min\{\abs{\llnf{u(i,\tau)}}:\tau\in R\}$,
thus having $\abs{s_{i,\rho}}\leq 2r$
for all $\rho\in R$.
We modify the above algorithm so that after the $i$-th round we only the store the following information:
\begin{itemize}
\item A list of the $s_{i,\rho}$ for $\rho\in R$, and
\item the lexicographical ordering of the prefixes $p_{i,\rho}$ ($\rho\in R$).
\end{itemize}
Note that this information only takes a constant amount of space
and that all $p_{i,\rho}$ have the same length. 

It is clear now that we can perform the minimum search in constant
time. Say, we need to compare $\llnf{u(i,\rho)}T\gam$ and $\llnf{u(i,\tau)}T\del$. 
As $\abs{p_{i,\rho}} = \abs{p_{i,\tau}}$ we can compare
the length by looking at the lengths of $s_{i,\rho}T\gam$ and $s_{i,\tau}T\del$.
In case $\abs{s_{i,\rho}T\gam} = \abs{s_{i,\tau}T\del}$
we check the ordering between $p_{i,\rho}$ and $p_{i,\tau}$.
If $p_{i,\rho}=p_{i,\tau}$, we finally compare $s_{i,\rho}T\gam$ and $s_{i,\tau}T\del$. 

The length of the shortest word $\llnf{u(i, \rho)}$
(with $\rho \in R$) 
strictly increases from round $i$ to round $i+1$, 
since a factor of type $T\gam$ is concatenated. So we can actually update the 
information in constant time. 
We simply have to compute new suffixes and a new linear order on $R$. As we 
need to recover $p_{i+1,\rho}$ later, the algorithm outputs a column vector with
the portions of the suffixes which were cut due to length constraints
plus a pointer to the index of the preceeding column where the minimum was achieved. 

The output of the algorithm can be viewed as a matrix with 
$2r+1$ rows and a linear number of columns.
The entry $(\rho,i)$ contains a word of constant length and
a pointer to some $(\tau,i-1)$.
In the last round $\ell$ the output consists of 
the suffixes $s_{\ell,\rho}$ and, using these,
the \llex normal form 
for each $u(\ell,\rho)$ can be read 
by a single scan from right to left in the
matrix, starting at position $(\rho, \ell)$ and following the pointers. In particular, we can read
off $\llnf{w} = \llnf{u(\ell, \bet_\ell)}$
in linear time.
\end{proof}

\begin{corollary}\label{wiesiehtsieaus}
Let $w\in \oneset{t,T,a,A}^*$ be a horocyclic 
word 
with  $w \gq \alp \in \Z$ and let $\ell\in \N$ be the number
computed in Proposition~\ref{fritzchen}. 
Then we have 
$$\llnf{w}= t^k\bet T\alp_1 \cdots T \alp_k$$
with $0\le k - \ell \in \Oh(1)$,  
$\abs {\alp_i} < q$, $\alp_k \equiv \alp \bmod q$, $\abs {\bet} < 2q$, and if $k > 0$ then 
 $\bet \in p\Z$.
\end{corollary}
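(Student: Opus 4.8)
\textbf{Proof proposal for Corollary~\ref{wiesiehtsieaus}.}

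The plan is to combine Proposition~\ref{fritzchen} with Proposition~\ref{fritz} and then read off the claimed structural properties from Lemma~\ref{danksprung}. First I would apply Proposition~\ref{fritzchen} to $w$, producing in linear time a slope $u = \bet_0 T \bet_1 \cdots T \bet_\ell$ together with the integer $\ell \in \Theta(\log\abs\alp)$, such that $\llnf{w} = t^\ell \llnf{u}$. As noted in the remark right after Proposition~\ref{fritz}, the slope $u$ satisfies the smallness hypothesis (\ref{eq:smallbetas}), i.e. $\abs{\bet_i} < q$ for $i \neq 0$ and $\abs{\bet_0} < 2q$. Hence Proposition~\ref{fritz} applies to $u$ and yields $\llnf{u} = t^{k'} \bet T \alp_1 \cdots T \alp_{k'+\ell}$ with $k' \in \Oh(1)$. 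Setting $k = \ell + k'$ we get $\llnf{w} = t^\ell \llnf{u} = t^k \bet T \alp_1 \cdots T \alp_k$, and $k - \ell = k' \in \Oh(1)$ is nonnegative, which is the first claim.

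Next I would verify the side conditions on the coefficients. Since $\llnf{w}$ is a geodesic representation of the horocyclic element $\alp$, Lemma~\ref{danksprung} tells us it has exactly the shape $t^k$ followed by a slope, and moreover that no geodesic of a $\Z$-coefficient sitting strictly between two $T$'s (i.e. at a non-peak position) can itself use letters $t$; combined with the standard fact $(\pm q)t \gq t(\pm p)$ and its dual, this forces $\abs{\alp_i} < q$ for the interior coefficients $\alp_1,\dots,\alp_{k-1}$, and for the last coefficient $\alp_k$ as well since $\llnf{w}$ is Britton-reduced (a factor $T \mu q$ at the end would Britton-reduce against... actually the relevant point is simply that $\llnf{w}$ being \llex-minimal and Britton-reduced forces $\abs{\alp_k} < q$, exactly as argued inside the proof of Proposition~\ref{fritz} where each $\llnf{u(i,\rho)}$ was shown to end in some $T\gam$ with $\abs\gam < q$). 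The congruence $\alp_k \equiv \alp \bmod q$ is immediate: in any Britton reduction of $\llnf{w}$ back to $a^\alp$, the innermost $T$ must eat a block that is $\equiv \alp_k$, and each further $T$-reduction multiplies by $p$ and adds multiples of $q$, but since $p \mid q$ is \emph{not} assumed here we instead argue directly — reducing $t^k \bet T \alp_1 \cdots T\alp_k$ means first handling $\alp_k$: the last letter $T$ pairs with the last $t$ of the prefix only after the inner slope collapses, and tracking residues mod $q$ through $t \mu p T \gq \mu q$ shows $\alp \equiv \alp_k \bmod q$.

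Finally, the bound $\abs\bet < 2q$ and the divisibility $\bet \in p\Z$ when $k > 0$ come from the structure produced by the two propositions: in Proposition~\ref{fritz} the head coefficient $\bet$ of $\llnf{u}$ either comes straight from the precomputed table (when no new $t$'s are produced, $\abs\bet < 2q$ trivially from (\ref{eq:smallbetas})) or is produced by a greedy step $\alp' = q\mu + \bet_0'$-type rewriting as in Proposition~\ref{fritzchen}, where a peak value is replaced by $t p\mu T \bet$ — so a genuine leading $t$ forces $\bet$ to be the image under the HNN map, hence a multiple of $p$; and the greedy process keeps $0 \le \bet < 2q$ throughout. I expect the main obstacle to be bookkeeping the congruence $\alp_k \equiv \alp \bmod q$ and the divisibility $\bet \in p\Z$ cleanly without $p \mid q$, i.e. making precise which letter of the $t^k$-prefix pairs with the trailing $T$ in a Britton reduction; everything else is a direct assembly of the two preceding propositions and Lemma~\ref{danksprung}.
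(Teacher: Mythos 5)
Your proof is correct and follows the paper's own route: the paper likewise obtains the shape $t^k\bet T\alp_1\cdots T\alp_k$ by chaining Proposition~\ref{fritzchen} with Proposition~\ref{fritz} and then dismisses the remaining assertions as ``a direct consequence,'' which you spell out in more detail. One small slip worth fixing in your congruence argument: the last $T$ matches the \emph{first} $t$ of the prefix (the innermost matched pair is the last $t$ with the first $T$, consuming $\bet$ — whence also $\bet\in p\Z$), so the outermost \Breduction step has the form $t(\mu p)T\alp_k \gq \mu q + \alp_k = \alp$, which immediately gives $\alp\equiv\alp_k\bmod q$ without any tracking of residues through intermediate steps.
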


\begin{proof}
Propositions~\ref{fritzchen} and~\ref{fritz} show
that $\llnf{w}$ has the form $t^k\bet T\alp_1 \cdots T \alp_k$
with $k - \ell \in \Oh(1)$. 
The other assertions are a direct consequence: $\abs {\alp_i} < q$, $\alp_k \equiv \alp \bmod q$, $\abs {\bet} < 2q$, and if $k > 0$, then 
$\bet \in p\Z$.
\end{proof}
 
Let us call a slope $w= \bet_0 T\bet_1 \cdots T \bet_\ell$ 
\emph{\sh,}  if there is a number $k$ such that 
$t^k u$ is horocyclic. Clearly, if such a $k$ exists, then we have 
$k = \ell$, so $k$ is unique. 
 
\begin{corollarybr}\label{allesreg}
\begin{enumerate}[1.)]
\item\label{ra}  The set of slopes in \llex normal form 
$$ \set{w = \bet_0 T\bet_1 \cdots T \bet_\ell \in \oneset{T,a,A}^*}{
w = \llnf{w}}$$
is regular. 
\item\label{rb}  If $p$ devides $q$, then the set of \sh slopes in \llex normal form
$$ \set
{w 
\in \oneset{T,a,A}^*}
{w = \llnf{w} \text{ and } w \text{ is \sh}}$$
is regular. 
\item \label{rc}  If $p$ devides $q$, then the set of 
horocyclic elements in \llex normal form 
$$ \set{w 
\in \oneset{T,a,A}^*}{
w = \llnf{w} \text{ and } w \gq \alp \in \Z}$$
is a deterministic (and unambiguous) linear context-free (one-counter) language; and it can be 
recognized in $\log$-space. The growth series of the horocyclic subgroup is a rational 
function. 
\end{enumerate}
\end{corollarybr}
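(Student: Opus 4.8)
The plan is to treat the three parts in order, deriving~(\ref{rb}) from~(\ref{ra}) and~(\ref{rc}) from~(\ref{rb}).

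\textbf{Part~(\ref{ra}).} I would show that ``being a \llex normal form'' is a finite‑state property of a slope, equivalently that the \llex normal forms of slopes are exactly the slopes avoiding a finite set of forbidden factors of length $\Oh(r)$ (with $r$ as in~(\ref{eq:fred})). A slope in \llex normal form is geodesic, so by the bounds recorded just before the definition of a sink one has $\abs{\bet_i}<q$ for $i\geq1$, and since any shortening of the leading block $a^{\bet_0}$ — using a geodesic of $a^{\bet_0}$ ending in $T$ (cf.\ Lemma~\ref{danksprung}) — would shorten the whole word, also $\abs{\bet_0}<3q$. Thus every such slope satisfies a version of~(\ref{eq:smallbetas}) with the constants absorbed into the set $R$, so the dynamic‑programming algorithm of Proposition~\ref{fritz} applies to it. That algorithm computes $\llnf{w}$ for $w=\bet_0T\bet_1\cdots T\bet_\ell$ by a left‑to‑right sweep keeping only constantly much data (the trailing pieces $s_{i,\rho}$ with $\abs{s_{i,\rho}}\leq2r$, together with the lexicographic order of the $p_{i,\rho}$, $\rho\in R$); and $w=\llnf{w}$ \IFF for every $i$ the prefix $\bet_0T\cdots T\bet_i$ of $w$ equals $\llnf{u(i,\bet_i)}$. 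This last equality can be tracked on the fly: once $\llnf{u(i,\bet_i)}$ becomes strictly shorter than the prefix read so far — or lexicographically smaller at equal length — it stays so at every later round, since one update only appends a single block $T\gam$; while equality persists, testing it uses only the stored trailing pieces and the stored order. Hence a finite automaton decides $w=\llnf{w}$, and the set in~(\ref{ra}) is regular.

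\textbf{Part~(\ref{rb}).} As regular languages are closed under intersection, by~(\ref{ra}) it suffices to show that the \sh slopes form a regular set; this is where $p\mid q$ enters. A slope $w=\bet_0T\bet_1\cdots T\bet_\ell$ is \sh \IFF $t^\ell w$ is horocyclic. In $t^\ell w=t^\ell\bet_0T\bet_1\cdots T\bet_\ell$ the only Britton‑reducible factor is $t\bet_0T$, reducible exactly when $\bet_0\in p\Z$, say $\bet_0=p\mu_0$, after which one is left with $t^{\ell-1}(q\mu_0+\bet_1)T\bet_2\cdots T\bet_\ell$. Because $p\mid q$ we have $q\mu_0\in p\Z$, so the new leading coefficient lies in $p\Z$ \IFF $\bet_1\in p\Z$; iterating, $t^\ell w$ reduces to an integer \IFF $\bet_i\in p\Z$ for all $0\leq i\leq\ell-1$. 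The condition ``in every block but the last, the number of letters is divisible by $p$'' is plainly recognized by counting modulo $p$, so the set of \sh slopes — hence the set in~(\ref{rb}) — is regular. (Without $p\mid q$ the same cascade forces divisibilities modulo growing powers of $p$, which is why the hypothesis is imposed.)

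\textbf{Part~(\ref{rc}).} By Corollary~\ref{wiesiehtsieaus}, every \llex normal form of a horocyclic element has the form $t^k v$ with $v=\bet T\alp_1\cdots T\alp_k$ a slope containing exactly $k$ letters $T$; moreover $v$ is then itself a slope in \llex normal form (prepending the common prefix $t^k$ preserves geodesicity and the \llex order) and is \sh with exponent $k$, so $v$ lies in the language of~(\ref{rb}). The converse fails — e.g.\ in $\mathbf{BS}(1,3)$ the word $Ta^2$ lies in the language of~(\ref{rb}) but $tTa^2$ is not geodesic — so one must further intersect with the condition that $t^k v$ actually be a \llex normal form: that no $tT$ cancels (i.e.\ $\bet\neq0$ when $k\geq1$), that the forced Britton reduction of $t^k v$ never shortens it, and that no bounded‑window lexicographic improvement applies. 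Along the forced reduction chain the leading coefficient eventually grows by the factor $q/p>1$, so whether that chain ever shortens $t^k v$ is decided from a bounded prefix of it; together with the bounded‑window lexicographic conditions (handled as in~(\ref{ra})) this carves out a \emph{regular} language $L'$ inside the one of~(\ref{rb}), and the set in~(\ref{rc}) is $\set{t^{\,k}v}{v\in L',\ k=\#_T(v)}$, the map $v\mapsto t^{\#_T(v)}v$ being a bijection from $L'$ onto it (and from $\Z$ onto it, since it is just $\alp\mapsto\llnf\alp$). From this description the rest follows: a deterministic one‑counter automaton recognizes the set (read the maximal $t$‑prefix incrementing the counter; then run the DFA for $L'$ on the remainder, decrementing at each $T$ and rejecting on underflow; accept iff the DFA accepts with counter $0$), whence it is in deterministic $\log$‑space; a matching unambiguous \emph{linear} grammar is obtained by generating the reversed language $\set{v^R t^k}{\dots}$ via productions $U\to aU\mid AU\mid TU't$ and $U'\to aU'\mid AU'\mid U\mid\eps$ with the $L'$‑constraints folded into the nonterminals (the leading $t$‑block has determined length, so derivations are unique); and since $\abs{t^k v}=2\#_T(v)+\#_a(v)+\#_A(v)$, the growth series of $\gen a$ equals $\sum_{v\in L'}z^{2\#_T(v)+\#_a(v)+\#_A(v)}$, the specialization at $(z^2,z,z)$ of the rational multivariate generating function of $L'$, hence a rational function of $z$.

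The hard part is Part~(\ref{ra}): recasting ``is a \llex normal form'' as a finite‑state condition, which rests on the bounded‑state correctness analysis behind Proposition~\ref{fritz}. After that, (\ref{rb}) is a one‑line intersection argument and (\ref{rc}) is essentially bookkeeping — the one remaining subtlety being to pin down exactly the finite list of local conditions defining $L'$.
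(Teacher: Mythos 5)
Your parts (1) and (2) follow the paper's own route. For (1) you simulate the constant-space dynamic program of Proposition~\ref{fritz} in the finite control of an automaton and check on the fly that the input agrees with the computed normal form; your preliminary bounds $\abs{\bet_i}<q$ (for $i\geq 1$) and $\abs{\bet_0}<3q$ make explicit a point the paper leaves implicit. For (2) your cascade argument (each leading block must lie in $p\Z$, and $p\mid q$ keeps the carries in $p\Z$) is exactly the paper's ``counting modulo $p$''.

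Part (3) is where you genuinely diverge. The paper derives (3) from (2) in one line --- ``we simply have to check additionally that the number of $t$ and $T$ match'', i.e.\ it identifies the language of (3) with $\{t^{\#_T(v)}v : v\in L_2\}$, $L_2$ being the language of (2). You correctly observe that this identification fails in one direction: your counterexample $v=Ta^2$ in $\mathrm{BS}(1,3)$ is a \sh slope in \llex normal form, yet $tTa^2\gq a^2$ is not geodesic, so $tTa^2$ is not a normal form (the other containment does hold, by Corollary~\ref{wiesiehtsieaus} together with your cancellation argument). Your repair --- cut $L_2$ down to a sublanguage $L'$ on which $v\mapsto t^{\#_T(v)}v$ does produce normal forms, then build the deterministic one-counter automaton, the unambiguous linear grammar, and the rational growth series from $L'$ --- is the right shape, and those last constructions are routine once $L'$ is in hand. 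The one real gap is that the regularity of $L'$ is asserted rather than proved: the claim that geodesicity and lexicographic minimality of $t^kv$ are decided by a bounded prefix of $v$ is plausible (an extra level costs $+2$ letters while the top coefficient scales by $q/p>1$, so only heights $k\pm \Oh(1)$ can compete), but this comparison is precisely the technical content of the repaired statement and needs to be carried out. To be fair, the paper does not close this gap either --- it leans on \cite[Prop.~9.1]{fcs09} --- so your version is more careful than the paper's sketch while still leaving its key new claim as an assertion.
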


\begin{proof}
Formally, Statement \ref{ra} does not follow from assertion in Proposition~\ref{fritz}, but analyzing its proof shows that all arithmetic computations 
concern only a constant number of integers of constant size. This can be 
done in the finite control. For accepting a \llex normal form, we do not need any output, we just have to check that the input agrees locally 
with a potential output. Again, this can be done in the finite control.
The result follows. 
 
Statement \ref{rb} follows from \ref{ra} because a test 
whether a slope is \sh can be done by counting modulo $p$. 
 
Statement \ref{rc} follows from \ref{rb}, we simply have
to check additionally that the number of $t$ and $T$ match. 
This can be done by a deterministic (one-counter) pushdown automaton, which 
translates to an unambiguous linear \confree grammar. The growth series of
unambiguous linear context-free languages is rational, \cite{MR42:4343}.
\end{proof}

\begin{remark}\label{rem:ft}
Statement \ref{rc} is essentially a result due to Freden et al., see \cite[Prop.~9.1]{fcs09}. 
Statement \ref{ra} is slightly more general than \cite[Prop.~9.1]{fcs09}
because our result holds for all $p$ and $q$. There is a crucial difference
for $p \mid q$.
It is only when $p \mid q$ that we can test with a push-down automaton whether an input word $w= t^\ell\bet_0 T\bet_1 \cdots T \bet_\ell$
is horocyclic, see \cite[Thm.~7.2]{fcs09}.
So we need this as a \emph{promise} in order to produce horocyclic words with the sort of finite transducer we used in the proof of Proposition~\ref{fritz}.
A transducer cannot compare  $m= \ell$; and 
even if $m= \ell$, it cannot test whether the input is horocyclic. 
This part is however trivial for  $p \mid q$ by counting modulo $p$. 
This is why \ref{rb} follows from \ref{ra} easily in this case. 
\end{remark}

\section{Peak normal forms}\label{sec:pnf}

We consider $\Delta = \Z \cup \oneset{t,T}$
as an infinite alphabet of symbols with the 
following linear order:
\begin{enumerate}[1.)]
\item  $t < T < \alp$
\item  $\alp < \bet$, if $\abs \alp < \abs \bet$
\item  $\alp \leq  \bet$, if $0 \leq \alp = \abs \bet$
\end{enumerate}
The \llex order on $\Delta^+$ transfers to a linear 
order $\lldel$ on words of $\FP\subseteq\{a,A,t,T\}^\ast$. 

Consider a word $w \in \FP$ such that 
\begin{equation*}
	w  = \alpha_0'\theta_1'\alpha_1'\cdots\theta_\ell'\alpha_\ell'.
\end{equation*}

For every position $i\in\oneset{0,\ldots,\ell}$ of $w$ we define its height by $h(0)=0$ and $h(i)=h(i-1)+1$ if $\theta_i' \sq 
t$ or $h(i)=h(i-1)-1$ when $\theta_i' \sq T$. The height of $w$ is defined by 
$h(w)=\max\set{h(i)}{0\le i\le \ell}$. 

Again, this notion arises from the shape of the path of $u$, 
if reading a $T$ means going downwards and
reading a $t$ upwards, as we have done before. Consider e.g.
$w=ATa^2taTattA^{10}TaTa^2tta^{42}T$:

\begin{center}
\begin{BS}{10}{2}
	\BSdesc{\text{A word of height 1
	}}
	\BSoff{0}{1}
	\BSheights{1}
	\BSa{-1}\BST\BSa{2}\BSt\BSa{1}\BST\BSa{1}\BSt\BSt\BSa{-10}\BST\BSa{1}\BST\BSa{2}\BSt\BSt\BSa{42}\BST\BSa{}
\end{BS}
\end{center}

For the peak normal form we compute some \Breduction first. 
Thus, we let $w \gq u$ and 
\begin{equation*}
	w \gq u = \alpha_0\theta_1\alpha_1\cdots\theta_k\alpha_k
\end{equation*}
where $u$ is \Breduced. 
Recall that the sequence $(\theta_1, \ldots, \theta_k)$ depends  on $w$ only.

We say that the position $i$ is the \emph{peak} of $u$, 
if $i$ is maximal among all $i$ with $h(i)= h(u)$, i.e. it is the rightmost among the hightest local peaks. 
Define $u_1$ by 
\begin{equation*}
	u_1= \alpha_0\theta_1\cdots\alpha_{i-1}\theta_{i-1}
\end{equation*}
We obtain a natural factorization
$$u = u_1 \alp_i u_2.$$

\begin{center}
\begin{BS}{10}{2}
	\BSdesc{\text{The peak of a word}}
	\BSoff{0}{1}
	\BST\BSt\BST\BSt\BSt\BST\BST\BSt\BSt\BSa{\!\!\!\!\text{peak}}\BST
\end{BS}
\end{center}

We say that a \Breduced word $u = \alpha_0\theta_1\alpha_1\cdots\theta_k\alpha_k$ is the 
\emph{\Bpnf}of 
$w$ if 
\begin{enumerate}[1.)]
\item  $w \gq u$ and $\abs{g(w)} = \Abs{u}$
\item  Among all choices satisfying $w \gq u$ and $\abs{g(w)} = \Abs{u}$
we choose the one where $u_1$ is the first in the order $\lldel$,
after that we minimize  $\ov u_2$ in the order $\lldel$.
\end{enumerate}

We say that a word $v$ is the 
\emph{\peaknf}of  
 $w$
if 
$$v = \llnf{\alpha_0}\theta_1\llnf{\alpha_1}\cdots\theta_k\llnf{\alpha_k}$$
where $u = \alpha_0\theta_1\alpha_1\cdots\theta_k\alpha_k$ 
is in \Bpnf. 
The \peaknf of a word $w$ is geodesic, and it is denoted 
as $\pnf{w}$ in what follows.

\section{Difficult cases and solution for hills}\label{sec:di}

Let $u = \alp_0\theta_1\alp_1\cdots \theta_k\alp_k$ be \Breduced. Due to Section~\ref{sec:ghw} we can compute the norm $\Abs{u}$ and 
\llex normal forms for each $\alp_i$. Unfortunately,  we are still
not able to compute a geodesic for $u$ efficiently, in general. 
But at least we can identify difficult cases and solve the the problem for so-called hills. 
A \emph{hill} is a word $w$ such that we 
have $$w \gq u = \bet_{\ell}t \cdots \bet_1 t\alp_0T\alp_1\cdots T\alp_m$$
for some $u$ and $\ell,m \geq 0$. 

\begin{center}
\begin{BS}{6}{3}
	\BSdesc{\text{A hill}}
	\BSoff{0}{0}
	\BSa{\!\!\!\beta_3}\BSt\BSa{\!\!\!\beta_2}\BSt\BSa{\!\!\!\beta_1}\BSt
	\BSa{\!\alpha_0}
	\BST\BSa{\alpha_1}\BST\BSa{\alpha_2}
\end{BS}
\end{center}

Note that all horocyclic 
words are hills, so their \llex normal form is a hill representation. 
It is clear that $w$ is a hill \IFF 
its \Breduction already has the form
$\bet_{\ell}t \cdots \bet_1 t\alp_0T\alp_1\cdots T\alp_m$. 

We show that we can compute the \peaknf for
hills $w$ very efficiently, see also \cite{elder09linear}. If $w$ is not a hill, then 
we can in linear time reduce the computation of the 
\peaknf of $w$ to the the computation of the 
\peaknf of so-called \di words, which are defined below. 

For $p\mid q$ we will solve the remaining difficult cases
in the next section.  

In this paper we call a word $w$ \emph{\di,}
if its \Breduction is $\alp_0\theta_1\alp_1\cdots \theta_k\alp_k$ with  $\theta_1= T$ and $\theta_k= t$, in other words it has the form $\alp T v t \bet$.

\begin{center}
\begin{BS}{9}{3}
	\BSdesc{\text{A difficult word}}
	\BSoff{0}{2}
	\BST\BSt\BSt\BST\BST\BSt\BST\BST\BSt
\end{BS}
\end{center}

Note that every \Breduced word $u$ has a unique representation 
$$u = \alp_1t\cdots \alp_kt D T \bet_m \cdots T \bet_1$$
where either $D = \delta$ is horocyclic (making $u$ a hill) or $D=\alp T v t \bet$, i.e. $D$ is \di. 

\begin{center}
\begin{BS}{14}{4}
	\BSdesc{\text{Factorization of a word}}
	\BSoff{0}{0}
	\thicklines
	\BSa{\!\!\!\!\alp_1}\BSt\BSt\BSa{\!\!\!\!\!\alp_k}\BSt\BSa{}
	\thinlines
	\BST\BSt\BSt\BST\BST\BSt\BST\BST\BSt
	\thicklines
	\BSa{}\BST\BSa{\bet_m}\BST\BSa{\ \bet_1}
\end{BS}
\end{center}

Assume that we have an algorithm that computes {\peaknf}s for \di words. 
Let $\T(d)$ denote the maximal time to compute $\pnf{D}$ for $\Abs{D} = d$ for $\Abs{D} = d$.
We assume that $\T(d+c)\in\Oh(\T(d))$ if $c$ is a constant.
We will show later that this assumption is justified in the case $p\mid q$. It remains
reasonable in general, since it holds for every sensible complexity bound, such as polynomials or singly exponential functions.

The following result generalizes Proposition~\ref{fritz}.
For its proof we use the constant $r$
as defined in Eq.~\ref{eq:fred}.

\begin{theorem}\label{thm:otto}
Let $u = \alp_kt\cdots \alp_1t D T \bet_1 \cdots T \bet_m$ be \Breduced 
with $D$ either horocyclic ($D\in \Z$) or $D$ difficult.

\begin{enumerate}[1.)]
\item If $D$ is horocyclic, then we can compute the \peaknf $\pnf{u}$ in linear time. 
\item If $D$ is \di, then we can compute the \peaknf $\pnf{u}$ in 
$$\Oh(\T(\Abs D + \max\{\Abs{\alp_i},\Abs{\bet_j}\}) + \Abs{u}).$$
\end{enumerate}
\end{theorem}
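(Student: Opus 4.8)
The plan is to reduce both statements to the machinery already built for slopes in Proposition~\ref{fritz} and Corollary~\ref{wiesiehtsieaus}, by treating the two ``wings'' $\alp_k t\cdots\alp_1 t$ and $T\bet_1\cdots T\bet_m$ of $u$ as slope-like data that can be absorbed into the horocyclic/difficult core $D$. First I would observe that, because $u$ is \Breduced, the letters outside the core genuinely alternate as written, and by the discussion in Section~\ref{sec:ghw} every $\alp_i$ with $i\geq 1$ and every $\bet_j$ with $j\geq 1$ must satisfy $\abs{\alp_i}<q$ and $\abs{\bet_j}<q$ in any norm-minimizing representative; only the core $D$ can carry a large coefficient. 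So the peak of $u$ (in the sense of Section~\ref{sec:pnf}) lies inside the region dominated by $D$, and the factorization $u=u_1\alp_i u_2$ used to define the \Bpnf respects the decomposition into the two wings plus $D$.

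For statement 1.), when $D=\delta\in\Z$ is horocyclic, the word $u=\alp_k t\cdots\alp_1 t\,\delta\,T\bet_1\cdots T\bet_m$ is a hill. Here I would run the greedy reduction of Proposition~\ref{fritzchen} on $\delta$ to pull out a prefix $t^{\ell_0}$ and a slope remainder, then splice the existing wings onto it: the left wing $\alp_k t\cdots\alp_1 t$ prepends to the $t^{\ell_0}$-block, and the right wing $T\bet_1\cdots T\bet_m$ appends to the slope, producing a single slope on which Proposition~\ref{fritz} computes the \llex normal form in linear time. One has to check that the combined $\bet$-sequence still satisfies the smallness hypothesis~(\ref{eq:smallbetas}); this is where the remark after Eq.~\ref{eq:fred} is used, namely that the precomputed table was built with enough slack ($c=q-1$) that coefficients of absolute value $<q$ land in $R$, and the only possibly-large coefficient, $\delta$ itself, is handled by the greedy step exactly as in Proposition~\ref{fritzchen}. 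Then $\pnf{u}$ is obtained from $\llnf{\cdot}$ of this slope by replacing each coefficient with its own \llex normal form, which is a local substitution and costs only linear time. The symmetry $u\mapsto\ov u$ lets me treat the two wings uniformly.

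For statement 2.), when $D$ is \di, the new feature is that the core must be processed by the assumed black-box algorithm costing $\T(\Abs D)$, and the wings must be folded in on either side. The idea is: by definition of \di, $D=\alp T v t\bet$, so $u$ has the global shape $\alp_k t\cdots\alp_1 t\,\alp\,T\,v\,t\,\bet\,T\bet_1\cdots T\bet_m$; the left wing together with $\alp_1 t\cdots t\alp t$ forms a prefix of ``ascending then one descent'' type and the right wing together with $\bet T\bet_1\cdots$ is its mirror. I would first use the slope-style dynamic program (Proposition~\ref{fritz}, applied to the ascending side via $\ov{\cdot}$) to reduce each wing to a bounded amount of data: a constant-length suffix/prefix word plus, via the $\Oh(1)$ bound on $k$ in Corollary~\ref{wiesiehtsieaus}, a bounded correction that can be merged into the extreme coefficient of $D$. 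This enlarges $\Abs D$ by at most $\max\{\Abs{\alp_i},\Abs{\bet_j}\}$ plus a constant, so calling the black box costs $\Oh(\T(\Abs D+\max\{\Abs{\alp_i},\Abs{\bet_j}\}))$ by the stated closure assumption $\T(d+c)\in\Oh(\T(d))$; the remaining bookkeeping to reassemble the full word, and to choose $u_1$ and then $\ov u_2$ minimally in $\lldel$ as required by the \Bpnf definition, is linear in $\Abs u$. The main obstacle I expect is precisely this last point: verifying that optimizing the wings and the core independently and then concatenating actually yields a \emph{globally} norm-minimal and $\lldel$-minimal representative. One must argue that no geodesic can ``trade'' length between a wing and the core — which follows because a descent immediately after an ascent ($\cdots t\alp T\cdots$ with $\abs\alp$ small) cannot be shortened by a \Breduction, so the wing coefficients are already forced — and that the peak of the optimized $u$ still sits inside the optimized core, so the $u_1$-then-$\ov u_2$ tie-breaking decomposes as ``left wing $+$ left part of core'' followed by ``right wing $+$ right part of core,'' each of which is already minimized. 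Granting this structural independence, both running-time bounds follow by adding up the linear preprocessing, the slope dynamic programs on the wings, and the single black-box call on the (slightly enlarged) core.
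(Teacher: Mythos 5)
Your plan for case 1.) breaks down at the point where you claim the hill collapses to ``a single slope'': after your splicing, the word is $\alp_k t\cdots\alp_1 t\,t^{\ell_0}(\text{slope})\,T\bet_1\cdots T\bet_m$, whose left wing still contains ascending letters $t$ with arbitrary coefficients $\alp_i$ interleaved. A slope is by definition purely descending, so Proposition~\ref{fritz} does not apply to this word, and no involution trick merges an ascending and a descending wing into one slope. Note also that on the \emph{input} the wing coefficients need not be small --- your appeal to Section~\ref{sec:ghw} only constrains geodesic outputs --- so a greedy normalization pushing the excess of each $\alp_i,\bet_j$ into $D$ is a needed first step (this is precisely why the bound in case 2.) reads $\T(\Abs D+\max\{\Abs{\alp_i},\Abs{\bet_j}\})$). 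The paper then runs a genuinely two-sided dynamic program on $u(i,j,\rho,\delta)=\rho t\alp_{i-1}t\cdots\alp_1 tDT\bet_1\cdots T\bet_{j-1}T\delta$ by induction on $i+j$, with carries $\rho,\delta$ ranging over the bounded set $R$, the core being handled only in the base case (by Corollary~\ref{cor:horopolytime} if $D$ is horocyclic, by a constant number of black-box calls $\pnf{\rho D\delta}$ otherwise --- a constant number, not the single call you posit, though this does not change the asymptotics).

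The more serious gap is your structural independence claim that the wings can be optimized separately from the core because ``the wing coefficients are already forced.'' They are not. At each wing position one writes $\rho=\gamma+\mu q$ with $\abs\gamma<q$, and there are in general two admissible choices of $\gamma$ (of opposite sign), each propagating a different carry $\mu p+\alp_{i-1}$ inward; which choice is norm-minimal, and which wins the $\lldel$ tie-break on $u_1$ and then on $\ov{u_2}$, depends on everything between that position and the core. This coupling is exactly what the paper's recursion $\pnf{u(i,j,\rho,\delta)}=\min\oneset{\gamma t\,\pnf{u(i-1,j,\rho',\delta)}}$ captures, together with a constant-size table recording, for each carry in $R$, the length offset and the relative order of the candidate normal forms. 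Without this your ``optimize the wings and the core independently and concatenate'' step is unjustified; once you add it, your sketch essentially reproduces the paper's proof.
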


\begin{proof}
Applying a greedy algorithm similar to the one used in the proof of Proposition~\ref{fritzchen}
we can ensure that $0\le\abs{\alp_i},\abs{\bet_j}<q$ for all $1\le i\le k$, $1\le j\le m$. 
Doing this, $D$ might become longer, but this doesn't affect the case \textit{1.)} since we have a linear time algorithm for horocyclic words. In case \textit{2.)} it suffices to proove
a time bound of $\Oh(\T(\Abs{D})+\Abs{u})$. 

Now let
$$u(i,j,\rho,\delta)=\rho t\alp_{i-1}t\cdots \alp_1t D T \bet_1 \cdots T \bet_{j-1}T\delta.$$
By induction on $i+j$ we prove that the theorem holds for every word $u(i,j,\rho,\delta)$ with $\rho,\delta\in R=\set{\gam \in \Z}{\abs \gam \leq r}$. 

If $i+j=0$, we need to compute the \peaknf of $\rho D\delta$, a word of  length at most $\abs{D}+2r$. If $D$ is horocyclic, Corollary~\ref{cor:horopolytime} tells us that this can be done in linear time. If $D$ is \di, then we need $\Oh(\T(\Abs{D}+2r))$ which, by our assumption on $\T$, is bounded by $\Oh(\T(\Abs{D}))$. 

Now consider $i+j>0$. By symmetry we may assume that $i>0$. Since 
$D$ and $u(i,j,\rho,\delta)$ are \Breduced, $\pnf{u(i,j,\rho,\delta)}$ starts with $\gam t$ and we have $\gam\equiv\rho\bmod q$. In addition, $\abs{\gam}<q$, since
$\abs{a^qt} = \abs{A^q t}= q+1 > p+1=\abs{ta^p} = \abs{tA^p}$.
So, since the peak is inside of $D$, like in the horocyclic case, we have
$$\pnf{u(i,j,\rho,\delta)}=\min\oneset{\gam t \pnf{u(i-1,j,\rho',\del)}}$$
where the minimum is taken over all $\gam$ and $\rho^\prime$ such that $\abs{\gam}<q$ and $\rho=\gam+\mu q$ and $\rho^\prime=\mu p+\alpha_{i-1}$. Again, since $\rho\in R$, we have $\rho^\prime\in R$, so induction applies. 

For the implementation we use again a table of size ${\abs R}$.
It stores the length information 
$$\abs{\pnf{u(i,j,\rho,\delta)} } - \abs{ \pnf{u(i,j,0,\delta)}} \in \oneset{-2r, \ldots, 2r}$$
as well as the ordering between various words $\pnf{u(i,j,\rho,\delta)}$ and 
$\pnf{u(i,j,\tau,\delta)}$ as a linear order on $R$. 

This allows the minimum search in constant time by table lookup 
in a table of constant size with constant size entries. 
Indeed the ordering between $\gam t \pnf{u(i,j,\rho,\delta)}$
and  $\del t \pnf{u(i,j,\tau,\delta)}$ is dominated by the length
and next by the ordering between $\gam t $ and $\del t$.
For $\gam t = \del t$ we can refer to the ordering between 
 $\pnf{u(i,j,\rho,\delta)}$
and  $\pnf{u(i,j,\tau,\delta)}$.

The table update is possible in constant time, too.
The output is produced from right to left in this phase. 
\end{proof}

\begin{corollary}\label{cor:hill}
Let $w$ be a hill. Then we can compute the \peaknf $\pnf{w}$ 
in linear time. 
\end{corollary}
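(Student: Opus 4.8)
The plan is to reduce Corollary~\ref{cor:hill} to the two cases already handled by Theorem~\ref{thm:otto}. First I would recall that a hill $w$ is, by definition, a word whose \Breduction has the shape $\bet_\ell t \cdots \bet_1 t \alp_0 T \alp_1 \cdots T \alp_m$, and that this \Breduced form can be computed in linear time with integers kept in binary (Section~\ref{sec:intro}'s \Breduction discussion). So on input $w$ we may assume we already have this \Breduced representative $u$. The key observation is that the unique factorization $u = \alp_1 t \cdots \alp_k t\, D\, T \bet_m \cdots T \bet_1$ with $D$ either horocyclic or difficult, when applied to a hill, necessarily has $D$ horocyclic: indeed the $t$-sequence of $u$ is $t^\ell T^m$, which has no factor $T$ followed later by $t$, so the central block $D$ cannot be difficult and must be a single integer $\delta$. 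Hence every hill is exactly an instance of case \textit{1.)} of Theorem~\ref{thm:otto}.

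Given that, the proof is essentially a one-line appeal: by Theorem~\ref{thm:otto}, case \textit{1.)}, when $D$ is horocyclic we can compute $\pnf{u}$ in linear time, and since $\pnf{w} = \pnf{u}$ whenever $w \gq u$ and $u$ is \Breduced (the \peaknf depends only on the group element), we are done. I would spell out the (trivial) verification that the hypotheses of Theorem~\ref{thm:otto} are met: $u$ is \Breduced by construction, and $D = \delta \in \Z$ is horocyclic. One should also note that the linear-time bound for the \Breduction step and the linear-time bound from Theorem~\ref{thm:otto} compose to a linear-time bound overall, using the standing convention that arithmetic on the $\Oh(\abs w)$-bit integers costs constant time.

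The only point requiring a word of care — and the closest thing to an obstacle — is making sure the \Breduction we compute actually exhibits the hill shape, rather than merely being \emph{some} \Breduced word equivalent to $w$. This is fine because, as recalled in Section~\ref{sec:intro}, the $t$-sequence of a \Breduced word is uniquely determined by the group element; so if $w$ is a hill (i.e. \emph{some} \Breduced representative has $t$-sequence $t^\ell T^m$), then \emph{every} \Breduced representative does, in particular the one our linear-time algorithm produces. Thus the factorization lemma for \Breduced words forces $D \in \Z$, and the remaining content is entirely subsumed by Theorem~\ref{thm:otto}. I do not expect any genuinely hard step here; the corollary is a direct specialization, and its role in the paper is mainly to record that hills — a class that already contains all horocyclic elements — fall under the easy regime, leaving only the difficult words to be treated in the next section.
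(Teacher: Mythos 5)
Your proposal is correct and is exactly the derivation the paper intends: the corollary is stated as an immediate consequence of Theorem~\ref{thm:otto}, case \textit{1.)}, since (as the paper itself notes in the factorization discussion) a \Breduced word is a hill precisely when the central block $D$ is horocyclic. Your additional remarks on the uniqueness of the $t$-sequence and on $\pnf{w}=\pnf{u}$ for $w\gq u$ are sound and merely make explicit what the paper leaves implicit.
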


The proof of the following Corollary~\ref{peaknfhill} is rather technical.
We do not use the result anywhere, so we leave its proof to the 
interested reader. 
\begin{corollary}\label{peaknfhill}
Let $p$ divide $q$. Then 
the set of hills in \peaknf 
$$\set{w}{w= \pnf{w} \text{ and $w$ is a hill}}$$
is a deterministic (and unambiguous) context-free language. Its growth series 
is a rational function. 
\end{corollary}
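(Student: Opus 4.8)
The plan is to build a deterministic pushdown automaton that recognizes exactly the hills in \peaknf, and then invoke the standard fact that unambiguous context-free languages have rational growth series. Recall from the unique factorization of a \Breduced word that a hill has the shape $u = \bet_\ell t \cdots \bet_1 t \, \alp_0 \, T\alp_1 \cdots T\alp_m$ with all $\alp_i, \bet_j$ horocyclic, and that by Theorem~\ref{thm:otto} together with the greedy normalization in its proof, the \peaknf of such a hill satisfies $0 \leq \abs{\alp_i}, \abs{\bet_j} < q$ for indices away from the peak, while the single entry at the peak is bounded by $2q$ (this is essentially the statement of Corollary~\ref{cor:horopolytime} and Lemma~\ref{danksprung} applied on both sides of the peak). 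Moreover, the computation in the proof of Theorem~\ref{thm:otto} is a finite transducer computation: the only unbounded object is the $t$-sequence, and all arithmetic involves a constant number of integers of constant size, carried in the finite control, plus a pointer back along the string. So, just as in Corollary~\ref{allesreg}\ref{ra}, checking that an input word agrees locally with a potential transducer output is a regular condition; no output is needed, only a consistency check against the local rules of the dynamic program.

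Concretely, I would proceed as follows. First, describe the one-sided case: reading the word from the peak outward toward the right, the set of suffixes $\alp_0 T\alp_1 \cdots T\alp_m$ that occur in the \peaknf of some hill is regular, by exactly the argument of Corollary~\ref{allesreg}\ref{ra} (a \peaknf restricted to a slope is a \llex normal form of a slope, and that set was already shown regular); symmetrically for the left side via the involution $\ov{\phantom{x}}$. Second, to glue the two sides, note that the peak coefficient $\alp_0$ plays a distinguished role, and condition 2.) of the \Bpnf definition forces a specific tie-breaking: among all valid choices we take the lexicographically least $u_1 = \bet_\ell t\cdots\bet_1 t$ in $\lldel$, and then the least $\ov{u_2}$. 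Since $u_1$ and $u_2$ are slopes after replacing $t\leftrightarrow T$, their \llex-minimality is again a local/regular condition. Third, and this is where the promise $p \mid q$ enters: the automaton must also verify that $u$ is genuinely a hill, i.e. that the given word actually equals a hill in the group and in particular that the arithmetic ``wraps around'' consistently. The residues satisfy $\alp_i \equiv (\text{something})\bmod q$ and $\bet_j \equiv (\text{something})\bmod p$ along the two slopes; verifying that the $t$'s and $T$'s occurring are consistent with a hill representation requires counting $\ell$ against $m$-type data and checking a modular condition, and exactly as in Remark~\ref{rem:ft}, when $p \mid q$ this becomes testable by a one-counter pushdown automaton (count the height and check a residue mod $p$), whereas for general $p,q$ it would not be. Determinism comes because, reading left to right, the transducer-consistency conditions leave at most one legal continuation at each step, and the stack is used only as a height counter.

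Having a deterministic one-counter (hence deterministic context-free) acceptor, unambiguity is automatic, and the theorem of Chomsky--Sch\"utzenberger / the rationality of growth series of unambiguous context-free languages quoted via \cite{MR42:4343} gives that the growth series is a rational function. I would package this the same way the proof of Corollary~\ref{allesreg}\ref{rc} does: assert regularity/one-counter recognizability of the body, then layer on the matching-count condition with the pushdown.

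The main obstacle I expect is not the regularity of the slope parts — that is a direct reuse of Section~\ref{sec:ghw} — but rather making precise that the \Bpnf tie-breaking (condition 2.), minimize $u_1$ then $\ov{u_2}$) is a \emph{local} condition compatible with a left-to-right deterministic scan. A priori, minimizing $u_1$ in $\lldel$ could depend on global information about the whole word (e.g. where exactly the peak sits and what $\alp_0$ is), and one has to argue, using the independence lemma ``if $w = uw'v$ and $\pnf{w} = uw''v$ then $\pnf{w'} = w''$'' (the analogue of observation 5.) in Section~\ref{sec:ghw}), that the minimization decomposes into the two independent slope minimizations already handled in Corollary~\ref{allesreg}. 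The honest bookkeeping of this decomposition, plus the $p \mid q$ modular check for hill-membership, is the technical heart — which is precisely why the authors relegate the full proof to the interested reader.
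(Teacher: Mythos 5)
First, a caveat: the paper gives no proof of Corollary~\ref{peaknfhill} (it is explicitly left to the reader), so your sketch can only be measured against the toolkit of Section~\ref{sec:ghw}, which is clearly the intended route and the one you follow. Within that route there is a concrete gap: you have mislocated the peak. You assert that in the \peaknf of a hill ``the single entry at the peak is bounded by $2q$.'' This is false. The peak coefficient $\alp_0$ of the \Bpnf of a hill is an unbounded integer --- every horocyclic word $a^N$ is a hill whose Britton reduction is the single coefficient $N$ --- and the \peaknf replaces it by $\llnf{\alp_0}$, which by Corollary~\ref{wiesiehtsieaus} has the shape $t^j\gam T\gam_1\cdots T\gam_j$ with $j$ unbounded and only the \emph{leading} coefficient $\gam$ bounded by $2q$. (By contrast, the non-peak coefficients really are literal integers of absolute value less than $q$, by the no-geodesic-ends-in-$T$ argument of Section~2 together with Lemma~\ref{danksprung}.) So a hill in \peaknf is not (regular left slope)$\cdot$(bounded peak symbol)$\cdot$(regular right slope): it carries an embedded balanced block $t^j\gam T\gam_1\cdots T\gam_j$ at the top, and the word is in general not even Britton-reduced there, since $\gam\in p\Z$ for $j>0$.

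This error propagates to your explanation of where the pushdown is needed. A hill imposes no relation between the number $\ell$ of ascending and the number $m$ of descending letters of its Britton reduction, so there is nothing to check in ``counting $\ell$ against $m$-type data''; if the peak entry really were a bounded symbol, your construction would yield a regular language and would in particular miss all horocyclic normal forms $\llnf{N}$, which the language must contain. The counter is needed precisely to recognize the balanced block $\llnf{\alp_0}$ sitting at the peak, i.e.\ the deterministic one-counter language of horocyclic elements in \llexnf from part~\ref{rc} of Corollary~\ref{allesreg}, and this is also exactly where the hypothesis $p\mid q$ enters (cf.\ Remark~\ref{rem:ft}); the two slope flanks are regular for all $p,q$ by part~\ref{ra}. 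A correct construction must splice that one-counter acceptor between the two regular slope acceptors and argue that the block boundaries can be located deterministically (the left boundary of the $t^j$-run is not locally visible, since the $\bet_i$ may also vanish). The decomposition issue you flag at the end --- that the \Bpnf tie-breaking splits into two independent outward minimizations, justified via the dynamic program of Theorem~\ref{thm:otto} --- is a real obligation but secondary; the missing treatment of the expanded peak is what breaks the construction as written.
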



\section{Complete solution when $p$ divides $q$}\label{sec:pq}
\renewcommand{\t}{t}
\renewcommand{\T}{{T}}
From now on we assume that $p$ divides $q$. We give an algorithm for computing geodesics of any element in $BS(p,q)$ which runs in quadratic time, assuming that arithmetic operations in $\mathbb{Z}$ take constant time. One reason why things simplify is that we have 
a canonical mapping
$$\pi: \BSpq \to (\Z/ p \Z) * \oneset{t,T}^*$$ 
which is induced by mapping a 
\Breduced word $\alpha_0\theta_1\alpha_1\cdots\theta_k\alpha_k$ 
to:
\begin{equation*}
(\alpha_0 \bmod p) \theta_1(\alpha_1 \bmod p)\cdots\theta_k(\alpha_k \bmod p).
\end{equation*}
Using the confluent string rewriting system $BS$, we see that, in particular,  the sequence 
$(\alpha_0 \bmod p, \ldots, \alpha_k \bmod p) \in (\Z/ p \Z)^k$ depends on the image of 
$u$ in $\BSpq$, only. 

Recall that according to Eq.~\ref{eq:lp} the number of
sinks is denoted by $s(w)$. We have $1 \leq s(w) \leq 1 + \Abs{w}$.
Theorem~\ref{thm:main} is the main result of the paper. 

\begin{theorem}\label{thm:main}
Let $p$ be a divisor of $q$.
Let $w \in \oneset{t,T,a,A}^*$ be an input word. 
Then we can compute a geodesic and its geodesic length in 
quadratic time $\Oh(s(w)(s(w) + \Abs{w}))$.
\end{theorem}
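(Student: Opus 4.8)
The plan is to reduce the general problem to the \di case handled by Theorem~\ref{thm:otto}, and then to supply the missing ingredient: an algorithm that computes the \peaknf of a \di word in time roughly $\Oh((s+\Abs{w})\cdot(\text{something linear}))$, so that combining with the $s(w)$-fold recursion of Theorem~\ref{thm:otto} yields the quadratic bound. Concretely, I would first run a \Breduction (linear time, Section~3) to get $w\gq u=\alp_0\theta_1\cdots\theta_k\alp_k$ \Breduced, then factor $u=\alp_1 t\cdots\alp_k t\, D\, T\bet_m\cdots T\bet_1$ as in the remark following the definition of \di words, with $D$ either horocyclic or \di. If $D$ is horocyclic we are already done by part~1 of Theorem~\ref{thm:otto} (linear time). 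So assume $D$ is \di, $D=\alp T v t\bet$. By Theorem~\ref{thm:otto}~(2) it suffices to compute $\pnf{D}$ in time $\Oh(\T(\Abs D))$ with $\T(d)\in\Oh(s\cdot(s+d))$ where $s=s(D)\le s(w)$, and to check that the ambient assumption $\T(d+c)\in\Oh(\T(d))$ holds for this $\T$ (it does, since $s$ is unchanged by adding $\Oh(1)$ to $\Abs D$ and $s+(d+c)\in\Oh(s+d)$).

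The heart of the matter is therefore the \di word $D$ itself. Here I would exploit the collapse map $\pi\colon\BSpq\to(\Z/p\Z)*\oneset{t,T}^*$: since $p\mid q$, the residues $(\alpha_i\bmod p)$ along any \Breduced representative are invariants of the group element, so the "shape modulo $p$" and in particular all $t$'s and $T$'s are pinned down. The peak of $D$ lies strictly inside, at some position $i$ with $h(i)=h(D)$; write $D=D_1\alp_i D_2$. The strategy is the dynamic-programming one from Proposition~\ref{fritz} and Theorem~\ref{thm:otto}, but now carried out independently on the left part $D_1$ (climbing up to the peak) and on the reverse $\ov{D_2}$ (also climbing up to the peak), because the \Bpnf minimizes $u_1$ first and then $\ov u_2$ in $\lldel$. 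On each side the word decomposes into maximal "runs" between consecutive sinks; within each run one is essentially in the situation of a slope (resp. its mirror), and Propositions~\ref{fritzchen}, \ref{fritz} together with Theorem~\ref{thm:otto} give, for a fixed pair of small boundary residues $\rho,\delta\in R$, the \peaknf of that run in time linear in its norm. There are $\Oh(s)$ runs, and at each sink one must match up the $\Oh(1)$-sized boundary data ($\rho,\delta\in R$ and the constant-length suffix/order information) with the neighbouring run; propagating this across all $\Oh(s)$ sinks, while at each junction potentially re-scanning the constant-size tables, costs $\Oh(s)$ per run, hence $\Oh(s^2+s\Abs D)$ overall. Finally one stitches the left and right solutions at the peak coefficient $\alp_i$: because horocyclic elements commute and because of Lemma~\ref{danksprung}, the contribution of $\alp_i$ (after absorbing the carries pushed up from both sides) is a single horocyclic subproblem solved in linear time by Corollary~\ref{cor:horopolytime}, and the $\lldel$-minimality of $u_1$ then of $\ov u_2$ selects the unique \Bpnf.

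The main obstacle, as the authors themselves flag, is exactly this last gluing: the global \peaknf is \emph{not} simply the concatenation of locally optimal pieces, since a carry $\mu p$ pushed through a sink changes the coefficient seen by the next run, and a choice that is $\lldel$-better locally on $D_1$ may force a $\lldel$-worse $\ov D_2$. The dynamic program must therefore keep, at each sink, a constant-size summary of all Pareto-optimal $(\rho,\delta)$ continuations — this is where the constant $r$ of Eq.~\ref{eq:fred} and Eq.~\ref{eq:otto} is used to guarantee that only boundary residues in the fixed set $R$ ever arise — and the quadratic (rather than linear) running time is the price of not being able to compress this sink-to-sink propagation into a single pass. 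I would spend the bulk of the formal proof verifying (i) that $R$ is indeed closed under the carry operation $\rho\mapsto\mu p+\alp_{i}$ when $\abs{\alp_i}<q$, exactly as in Theorem~\ref{thm:otto}, so the table stays of constant size; (ii) that the $\lldel$-comparisons needed to pick the peak-position and to break ties between $u_1$ and $\ov u_2$ reduce to comparisons of $\Oh(1)$-length suffixes plus a stored linear order, as in Proposition~\ref{fritz}; and (iii) the bookkeeping that turns "$\Oh(s)$ runs, each handled in $\Oh(s+\text{run norm})$" into the stated $\Oh(s(w)(s(w)+\Abs w))$ bound, noting that $\sum(\text{run norms})\le\Abs w$ and $1\le s(w)\le 1+\Abs w$ so this also never exceeds $\Oh(\Abs w^2)$.
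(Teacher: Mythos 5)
Your outer skeleton is the paper's: Britton-reduce, factor $u=\alp_1t\cdots\alp_kt\,D\,T\bet_1\cdots T\bet_m$ with $D$ horocyclic or \di, and discharge the flanks via Theorem~\ref{thm:otto}. The genuine gap is in your treatment of the \di word $D$ itself, specifically the claim that ``only boundary residues in the fixed set $R$ ever arise'' and that each sink carries a \emph{constant}-size summary. That is false once there is more than one sink. The constant-size set $R$ of Eq.~\ref{eq:fred} works for slopes and for the monotone flanks in Theorem~\ref{thm:otto} because there a carry is multiplied by $p/q\le 1$ at every step; but inside a \di word a carry pushed \emph{downhill} through a $T$ is multiplied by $q/p\ge 1$, and is only brought back down by the matching ascent. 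When two valley-shaped pieces are concatenated their pushed-out carries \emph{add}, so after $j$ sinks the set of achievable boundary integers has magnitude and cardinality $\Theta(j)$, not $\Oh(1)$. This is exactly what the paper's Lemma~\ref{lem:range} quantifies: $R(V)\subseteq\set{\rho\in p\Z}{\abs\rho\le r\cdot s(V)}$ with $\abs{R(V)}\in\Oh(s(V))$, and this linearly growing range is the \emph{sole} source of the quadratic bound in Theorem~\ref{whou}. If your constant-size tables were correct the whole algorithm would be linear time; your own hand-waving about ``the price of not being able to compress this sink-to-sink propagation'' does not identify, and cannot replace, this accounting.

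A second missing ingredient is the reduction of a \di word to valleys (Section~\ref{sec:rtov}). You propose to split $D=D_1\alp_i D_2$ at the peak and optimize $D_1$ and $\ov{D_2}$ ``independently,'' then stitch; but the \Bpnf is defined by minimizing $u_1$ first and only then $\ov{u_2}$, and a locally optimal choice on the left changes the carry seen by the right, so independence fails without further argument. The paper resolves this by padding to the valley $T^\ell u t^m$, invoking Lemma~\ref{comval} ($pv\gq vp$ for valleys --- the one place where $p\mid q$ is used structurally, since it lets the accumulated $\alp_j\equiv 0\bmod p$ prefix coefficients be shifted across the valley without cost) and then running the \sv dynamic program of Lemma~\ref{lem:hugo}/Theorem~\ref{whou} \emph{twice}, once left-to-right and once right-to-left on $\gamma u_2$, to realize the two-stage $\lldel$-minimization. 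Your proposal contains no substitute for either the valley padding or the commutation lemma, so the stitching step at the peak is unjustified as written.
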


The rest of the paper is devoted to proving this result. 
The formal proof of Theorem~\ref{thm:main} is postponed to Section~\ref{thisistheendmyfriend}

We may start with a word $u \in \FP$ such that 
\begin{equation*}
	w \gq u = \alpha_0\theta_1\alpha_1\cdots\theta_k\alpha_k.
\end{equation*}

Recall the definition of the height from Section~\ref{sec:pnf}. 
The word $w$ is called a \emph{valley}, if $h(u)=0$ and $h(k)=0$.

\begin{center}
\begin{BS}{8}{2}
	\BSdesc{\text{A valley}}
	\BSoff{0}{2}
	\BST\BST\BSt\BST\BSt\BSt\BST\BSt
\end{BS}
\end{center}

A \Breduction cannot increase 
the height and leaves the height of the last position 
invariant. Therefore, whether or not a word is a valley can be checked
on its \Breduction. 

Valleys are generated by the following context-free grammar where $S$ is an  axiom: 
\begin{equation*}
	S\to SS\ \vert\ \alpha T S\beta t\ \vert\ \alpha
\end{equation*}

The following lemma can be based on this grammar. The proof is  
straightforward, 
but it uses in a crucial way that $p \mid q$. 

\begin{lemma}\label{comval} Let $ p \mid q$. 
	Let $v$ be a valley. Then $pv\gq vp$.
\end{lemma}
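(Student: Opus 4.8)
The plan is to induct on the structure of the valley using the context-free grammar $S\to SS \mid \alpha T S \beta t \mid \alpha$. The key identity we need to establish is that conjugating a valley $v$ by $a^p$ on the left equals conjugating by $a^p$ on the right in $\BSpq$, i.e. $a^p v \gq v a^p$. Since $p \mid q$, write $q = dp$ for some positive integer $d$; the crucial algebraic fact is that in $\BSpq$ we have $t a^p = a^q t$ in the sense that $t$ conjugates $a^p$ to $a^q$, and dually $a^p T = T a^q$. So pushing an $a^p$ across a $t$ (from the valley side outward) turns it into an $a^q = a^{dp}$, which is still a power of $a^p$; this is exactly why divisibility is used.

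First I would handle the base case $v = \alpha$, i.e. $v$ horocyclic: then $a^p v = a^{p+\alpha} = v a^p$ trivially, since horocyclic elements commute (stated in Section~\ref{sec:ghw}). Next, the product case $v = v_1 v_2$ with $v_1, v_2$ valleys: by induction $a^p v_1 \gq v_1 a^p$ and $a^p v_2 \gq v_2 a^p$, hence $a^p v_1 v_2 \gq v_1 a^p v_2 \gq v_1 v_2 a^p$, so we are done. The interesting case is $v = \alpha T v' \beta t$ with $v'$ a valley. Here I would compute
\begin{align*}
a^p \cdot \alpha T v' \beta t &\gq a^{p+\alpha} T v' \beta t \gq \alpha\, a^p T v' \beta t \gq \alpha\, T a^q v' \beta t \gq \alpha T\, a^q v' \beta t.
\end{align*}
Now $a^q = a^{dp} = (a^p)^d$, and applying the induction hypothesis $d$ times to the valley $v'$ gives $a^q v' \gq v' a^q$. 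Continuing:
\begin{align*}
\alpha T v' a^q \beta t &\gq \alpha T v' \beta a^q t \gq \alpha T v' \beta t a^p = v \cdot a^p.
\end{align*}
Here the step $a^q t \gq t a^p$ is the defining relation read in the direction that pushes the $a^q$ across the final $t$, producing $a^p$ on the outside, and $a^q \beta \gq \beta a^q$ is commutativity of horocyclic elements. This closes the induction.

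The main obstacle — really the only subtle point — is making sure the "push across $t$/$T$" steps are legitimate equalities in $\BSpq$ and that divisibility is what makes $a^q$ reducible to a power of $a^p$ after crossing; if $p \nmid q$ the quantity $a^q$ would not be a power of $a^p$ and the induction on the middle subvalley $v'$ would not go through (one cannot iterate $a^p v' \gq v' a^p$ to move an $a^q$ unless $p \mid q$). There is also a small bookkeeping matter: the grammar produces valleys only up to the equivalence $\gq$, and a general valley $w$ is merely \emph{equivalent} to a word generated by $S$ (its \Breduction has the valley shape, and a \Breduced valley is clearly in the image of the grammar), so I would first replace $w$ by its \Breduced form — which by the remark preceding the lemma is still a valley — and then apply the grammar-based induction; since $\gq$ is a congruence, proving $a^p v \gq v a^p$ for the \Breduced representative suffices.
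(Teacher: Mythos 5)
Your proof is correct and follows essentially the same route as the paper: structural induction on the grammar for valleys, with the case $v=\alpha T v'\beta t$ handled by pushing $a^p$ across $T$ to get $a^q$, iterating the induction hypothesis $q/p$ times on the inner valley (this is exactly where $p\mid q$ is used), and pushing $a^q$ back across the final $t$. The remark about Britton-reducing first is harmless extra care; otherwise your argument matches the paper's step for step.
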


\begin{proof}
 We have $p+\alp = \alp + p $ in $\Z$. If $u = vw$ and $v$, $w$ are 
 valleys, then, by induction, $pvw\gq vpw \gq vwp$. 
 If $v = \alpha T w\beta t$, then
 \begin{align*}
 pv& = (p+\alpha) T w\beta t
 \\ & \gq 
  \alpha T qw\beta t\\ 
  & \gq  \alpha T w(\beta+ q) t &\mbox{by induction, because $p \mid q$}\\
  & \gq  \alpha T w\beta tp.
  \end{align*}
\end{proof}

\subsection{Reduction to valleys}\label{sec:rtov}

Remember that we assume $ p \mid q$ and it  only remains to compute 
 \peaknf{}s 
for \di 
\Breduced words 
$u = \alp T v t \bet$. Let $\ell = h(u)$. There is a unique $m \in \N$ such that $T^\ell u t^m$ is a valley.
The word  $T^\ell u t^m$ is still \Breduced!
By Corollary~\ref{gunnar} we can compute 
the \peaknf  $\pnf{T^\ell u t^m}$ in quadratic time. 
The word $\pnf{T^\ell u t^m}$ has the form 
$\alp_1 T \cdots \alp_\ell T w$. 
Now, the sequence $\pi(T^\ell u)$ begins with 
$T^\ell \in (\Z/ p\Z)\ast\{t,T\}^\ast$. 
In other words, we obtain 
$\alp_1 \equiv  \cdots  \equiv  \alp_\ell  \equiv 0 \bmod p$.
Each of the first $\ell$ positions  finds a  matching position
greater than $\ell$ of the same height. So, we can shift all
integers $\alp_i$ for $1 \leq i \leq \ell$ to the right by Lemma~\ref{comval} without  increasing the length. 
Hence, the \peaknf  begins actually with 
$T^\ell$. For $m=0$ we conclude that $\pnf{T^\ell u t^m} = 
T^\ell \pnf{ u t^m} = T^\ell \pnf{ u}$ and we are done. 

It remains to compute $\pnf{u}$ for $m> 0$. 
In this case we write 
$$\pnf{T^\ell u t^m} = u_1\gamma u_2= 
T^\ell v_1\gamma u_2$$
where $\gamma$ is the integer on the (rightmost) peak of $T^\ell u$, which is also the (rightmost) peak of $u$. The peak of $u$ is not the last position in $u$, because $m>0$.
Hence,  $u_2$ is a valley which begins with 
the letter $T$. Moreover, we may assume that either  $u_1= 0$
or  it ends with $t$. 

Now we concentrate on the valley  $\gamma u_2$.
This time we compute, again in quadratic time 
the \peaknf \emph{from right to left}. This means we compute
$$\ov{\pnf{\ov  {\gamma u_2}}} = \delta v_2 t^m,$$
where $v_2$ begins with $T$. 


We claim  that $\pnf{u} =  v_1 \delta v_2$. Indeed, 
let  $\pnf{u} =  v_1' \delta' v_2'$ where $v_2'$ begins with $T$, 
$\pi(v_2) = \pi(v_2')$ and either $v_1= 0$ 
or $v_1$ ends in $t$. 

Obviously,  ${v_1 \delta v_2} \gq {v_1' \delta' v_2'}$ and 
$\abs{v_1 \delta v_2} = \abs{v_1' \delta' v_2'}$.
Now, $v_1' \lldel v_1$, hence  $T^\ell v_1' \lldel T^\ell v_1$
and therefore $v_1'= v_1$.

As we have $\abs{v_1 \delta v_2} = \abs{v_1' \delta' v_2'}$ and $v_1'= v_1$ we must have $\ov{v_2'} \lldel \ov{v_2}$ and 
$\ov{v_2't^m} \lldel \ov{v_2t^m}$.
Now, $\delta v_2 \gq \delta' v_2'$ and hence 
$\gamma u_2 = \delta v_2 t^m \gq \delta' v_2' t^m$.
We conclude $\delta v_2 = \delta' v_2'$, and finally 
$\delta = \delta'$. 

\begin{center}
\begin{BS}{8}{2}
	\BSdesc{$u$}
	\BSoff{1}{1}
	\BSa{\alpha}\BST
	\BSt\BSt\BST\BST\BSt\BSa{\beta}
\end{BS}
\begin{BS}{8}{2}
	\BSdesc{$T^\ell ut^m\ (\text{here }\ell=m=1)$}
	\BSoff{0}{2}
	\BSa{0}\BST\BSa{\alpha}\BST
	\BSt\BSt\BST\BST\BSt\BSa{\beta}\BSt\BSa{0}
\end{BS}
\end{center}

\begin{center}
\begin{BS}{8}{2}
	\BSdesc{$\pnf{T^\ell ut^m} = T^\ell v_1\gamma u_2$}
	\BSoff{0}{2}
	\BSa{0}\BST\BSa{}\BST
	\BSt\BSt\BSa{\gamma}\BST\BST\BSt\BSa{}\BSt\BSa{\eta}
\end{BS}
\begin{BS}{8}{2}
	\BSdesc{$T^\ell v_1 \ov{\pnf{\ov{ \gamma u_2}}} =T^\ell v_1 \delta v_2 t^m $}
	\BSoff{0}{2}
	\BSa{0}\BST\BSa{}\BST
	\BSt\BSt\BSa{\delta}\BST\BST\BSt\BSa{}\BSt\BSa{0}
\end{BS}
\end{center}

\begin{center}
\begin{BS}{8}{2}
	\BSdesc{$\pnf{u} =v_1 \delta v_2 $}
	\BSoff{1}{1}
	\BSa{}\BST
	\BSt\BSt\BSa{\delta}\BST\BST\BSt\BSa{} 
\end{BS}
\end{center}

\subsection{Standard valleys}\label{sec:sv} 
A word $V$ is called a \emph{standard valley},
if $V = \alpha_1\theta_1\cdots\alpha_k \theta_k\bet$ is a \Breduced
valley such that $\bet = 0$, $\abs{\alp_i} < q$ for all $1 \leq i \leq k$, 
and $\abs{\alp_i} < p$ whenever 
$1 \leq i \leq k$ and $\theta_i = T$.

Note that non-trivial \sv{}s end in the letter $t$.
By Section~\ref{sec:rtov} we may assume that we start the computation of
a \peaknf with  a \Breduced valley. The next lemma is a 
key step for the sequel. 

\begin{lemma}\label{lem:hugo}
Let $ p \mid q$ and $v$ be \Breduced valley, then we find in linear time
a \sv $V$ and an integer $\gamma\in \Z$ such that 
$v \gq V \gamma$. 
\end{lemma}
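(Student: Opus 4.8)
The goal is to transform an arbitrary \Breduced valley $v = \alp_0\theta_1\alp_1\cdots\theta_k\alp_k$ into the product $V\gamma$ of a \sv $V$ and a single integer $\gamma$, pushing all ``excess'' down toward the end and collecting it in $\gamma$, without increasing the length and in linear time. The main engine is the greedy reduction already used in Propositions~\ref{fritzchen} and~\ref{fritz}, combined with the commutation Lemma~\ref{comval}, which is the reason the whole scheme stays inside the class of valleys. First I would process the word from left to right, maintaining the invariant that everything already scanned is (a prefix of) a \sv. At a position $i$ with $\theta_{i+1}= t$ (an up-step) there is nothing to do on the left side: as noted in Section~\ref{sec:ghw}, an integer immediately to the left of a $t$ can be assumed to satisfy $\abs{\alp_i}<q$ by the relation $(\pm q)t\gq t(\pm p)$, and if $\theta_i = T$ as well one even gets $\abs{\alp_i}<p$. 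The interesting case is a local peak or a down-step.

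Concretely, when reading a factor $\theta_i\alp_i\theta_{i+1}$ with $\theta_{i+1}= T$, I would write $\alp_i = \mu q + \beta$ with $0\le\beta<q$ (or the sign-symmetric version), replace the factor $\alp_i T$ by $\mu p\, T\beta$, and carry $\mu p$ one step back to the left using $\mu p\gq t^{-1}$-matched structure — but since $v$ is a valley every $T$ has a matching $t$ to its left, so this carry is absorbed legitimately and, crucially, the resulting word is still a valley by the grammar $S\to SS\mid \alpha TS\beta t\mid\alpha$. Iterating this greedy step down each descending run is exactly the procedure of Proposition~\ref{fritzchen}, so it terminates in linear time (the integers shrink by a factor $p/q$ at each carry and are kept in binary). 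Afterwards every $\alp_i$ with $\theta_i = T$ satisfies $\abs{\alp_i}<p$, every other interior $\alp_i$ satisfies $\abs{\alp_i}<q$, and the only place where a large integer can remain is the final coordinate $\alp_k$ sitting at the last position (the word being a valley, the last position has height $0$). Calling that leftover integer $\gamma$ and the rest $V$, we have $v\gq V\gamma$ with $V$ a \sv, using Lemma~\ref{comval} to slide $\gamma$ past the trailing valley structure so that it ends up on the outside.

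The step I expect to be the main obstacle is bookkeeping the carries so that the rewriting genuinely stays within valleys and the total work stays linear. Each greedy replacement $\alp_i T\mapsto \mu p\,T\beta$ feeds $\mu p$ into the integer to the left, which may itself sit before another $T$ and trigger a further carry; one must argue this cascade is bounded (amortized) rather than quadratic, which follows because the magnitude drops geometrically and the binary length of the accumulated carries is $O(q\abs v)$ overall — the same estimate already invoked for \Breductions. The other delicate point is that a carry must never escape above height $0$: this is where ``valley'' is essential, since every descending step in a valley is balanced by a preceding ascending step, so $\mu p$ is always delivered to a genuine integer coordinate and never creates a new $t$ or $T$. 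Assembling these observations, the lemma follows; I would not belabor the arithmetic, only check that the invariant ``processed prefix is a \sv prefix, unprocessed suffix is a valley'' is maintained by each greedy step and that the final single leftover integer can be commuted to the right by Lemma~\ref{comval}.
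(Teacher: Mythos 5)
Your overall plan --- greedily normalize the coefficients and push all the excess toward the right end of the valley, collecting it in a single integer $\gamma$ --- is the right idea, and is essentially an iterative rephrasing of what the paper does by structural recursion on the valley grammar $S\to SS\mid \alpha TS\beta t\mid\alpha$. However, the concrete rewriting step you describe is incorrect, and the error propagates through the rest of the argument. The available relations are $\mu p\,T\gq T\,\mu q$ and $\nu q\,t\gq t\,\nu p$: an integer sitting immediately \emph{before} a $T$ must be split modulo $p$ (not modulo $q$), and its excess $\mu p$ travels to the \emph{right} of the $T$ as $\mu q$; an integer before a $t$ is split modulo $q$ and its excess travels to the right as $\nu p$. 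You instead write $\alp_i=\mu q+\beta$ at a $T$ and replace $\alp_i T$ by $\mu p\,T\beta$ with the carry going ``back to the left''. But $\mu p\,T\beta\gq T(\mu q+\beta)$, which is $T\alp_i$ and not $\alp_i T$, so this is not a valid identity in $\BSpq$; moreover a leftward carry would have to pass through a preceding $t$ via $t\,\nu p\gq \nu q\,t$, which \emph{increases} length, and it contradicts your own conclusion that the leftover accumulates at the final coordinate. The claim that the carried integers ``shrink by a factor $p/q$ at each carry'' is also false: a rightward carry grows by $q/p$ at each $T$ and shrinks by $p/q$ at each $t$; only the valley condition $h(i)\le 0=h(k)$ guarantees that the net factor over the remaining suffix is at most $1$.

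You also never locate where $p\mid q$ enters. In the paper's recursion for $v=\alp T u\bet t$ the excess created at the opening $T$ is the multiple of $q$, namely $\mu q$, and it must be commuted past the entire inner \sv $U$ before it can be combined with the inner leftover and $\bet$ and sent up through the closing $t$ as a multiple of $p$; this commutation is exactly Lemma~\ref{comval}, and it applies only because $p\mid q$ makes $\mu q$ a multiple of $p$. Your invocation of Lemma~\ref{comval} ``to slide $\gamma$ past the trailing valley structure'' does no work if, as you claim, the leftover already sits at the last position. With the moduli and the carry direction corrected, your sweep can be repaired into a valid proof (it then becomes the iterative form of the paper's cases $v=uw$ and $v=\alp Tu\bet t$), but as written the proposal does not establish the lemma.
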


\begin{proof}
First note that $V \gamma$ is always \Breduced,
because a non-trival \sv ends in the 
letter $t$.   
\begin{enumerate}[1.)]
\item   For  $v = \alp \in \Z$ we can choose $V=0$.
\item  Let $v= uw$
where $u$, $w$ are shorter valleys. By induction, there are 
a \sv $U$ and $\bet\in \Z$ such that
$u \gq U \bet$ and $U \bet$ is \Breduced.  Hence 
$v \gq U\bet w $  and  $ \bet w$ is a \Breduced
valley. By induction, there are a \sv $W$ and $\gamma\in \Z$ such that
$v \gq U \bet w \gq UW \gamma$.
\item Let $v = \alp T u \bet t$. Then $u$ is a \Breduced valley;
by induction,   there are 
a \sv $U$ and $\delta\in \Z$ such that
$u \gq U \delta$. 
Write $\alp = \mu p + \alp'$ with $0 \leq \alp' < p$, 
then we have  
$$v \gq \alp' T U (\delta + \mu q +\bet) t.$$

Write $\delta + \mu q +\bet = \nu q + \bet'$ with $0 \leq \bet' < q$, 
then we have  
$$v \gq \alp' T U \bet' t \nu p.$$ The word $V =\alp' T U \bet' t$ 
is a \sv. 
\end{enumerate}
\end{proof}

By Lemma~\ref{lem:hugo} it is enough to compute \geo normal
forms for words $u = Vw$ where $V$ is a \sv and $w \gq \gamma$ is
horocyclic. Again, this can be done with a dynamic programming approach as we demonstrate in the next subsection. 

\subsection{Computing geodesics for $Vw$}\label{sec:csv}
In this section $V$ is always a \sv and $w$ is word such that 
$tw$ is \Breduced. 

Every  standard valley can be generated by the 
following context-free grammar:
\begin{equation*}
	S\to SS\ \vert\ \alpha T S\beta t\ \vert\ 0\text.
\end{equation*}

Here $S$ is the axiom and
$\alp$, $\bet$ denote intergers which are also viewed as terminal symbols and satisfy $\abs{\alp} < p$ and $\abs{\bet} < q$.
Actually, the grammar produces non-\sv as well, because we can 
produce valleys which are not \Breduced. But this is not important
for what follows. 

The number of sinks (c.f. Eq.~\ref{eq:lp}) of a standard valley $V$ 
admits a nice recursion: 
\begin{equation*}
	s(V)=\begin{cases}
	1&\text{if }V=0 \cr
	s(U)+s(W)&\text{if }V=UW \text{ and }   U \neq 0 \neq W\cr 
	s(U)&\text{if }V=\alpha T U\t\bet.  
	\end{cases}
\end{equation*}

We now choose a constant $r$ similar as in Eq.~\ref{eq:otto}, but this time using
the constant $c=3q-2$. 
More precisely, we let $r\in \N$ be minimal such that 
\begin{equation}\label{const}
r \geq  p \cdot \frac{r+3q-2}{q}.
\end{equation}

The next step is to define for each \sv $V$ 
a \emph{range} $R(V)\subseteq p\Z$ such that $\abs{R(V)} \in \Oh(s(V))$.
The precise definition and necessary properties of $R(V)$ are determined 
by the next rather technical lemma. 

\begin{lemma}\label{lem:range}
Let $p \mid q$ and let $r$ be the constant of Eq.~\ref{const}. 
Given a \sv $V$ as input, we can (define and) compute in linear time 
a range $R(V)$ such that the following two properties hold: 
\begin{enumerate}[1.)]
\item  $R(V) \subseteq \set{\rho \in p \Z}{ \abs \rho \leq r \cdot s(V)}$. 
\item For all $\rho \in R(V)$ there is a \sv $V_\rho$ 
with $V \gq V_\rho \rho$. 
\end{enumerate}
\end{lemma}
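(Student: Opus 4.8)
The plan is to define $R(V)$ by structural recursion on the context-free grammar $S\to SS \mid \alpha T S\beta t \mid 0$ that generates standard valleys, and to verify both properties simultaneously by induction. For the base case $V=0$ we set $R(0)=\{0\}$, which trivially satisfies both conditions with $V_0 = 0$ and $s(0)=1$. For the product case $V=UW$ with $U\neq 0\neq W$, the idea is to push an integer $\rho = \sigma + \tau$ leftward in two stages: given $\rho\in p\Z$ we first want to extract a contribution $\tau\in R(W)$ past $W$, getting $V\gq U W_\tau \tau$, and then a contribution $\sigma\in R(U)$ past $U$; so we should define $R(UW)$ to be (a suitable subset of) $R(U)+R(W)$, and since $s(UW)=s(U)+s(W)$, the bound $\abs{\rho}\le r\cdot s(UW)$ follows from $\abs\sigma\le r\cdot s(U)$ and $\abs\tau\le r\cdot s(W)$ by the triangle inequality. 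The resulting word $V_\rho = U_\sigma W_\tau$ is again a standard valley (concatenation of two standard valleys is a standard valley, modulo the usual remark that $V_\rho\rho$ stays Britton-reduced because a nontrivial standard valley ends in $t$).

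The genuinely delicate case is $V=\alpha T U\beta t$ with $\abs\alpha<p$, $\abs\beta<q$. Here, given $\rho\in p\Z$, we want $V\gq V_\rho\rho$, i.e. we must move $\rho$ from the far right, across the trailing $t$, through the inner valley $U$, and across $T$ to the front. Moving $\rho p$ rightward... rather, the natural computation is: $\alpha T U\beta t\,\nu q \gq \alpha T U(\beta+\nu p)t$ when we bring $\nu q$ leftward past $t$ (using $q\mu t\gq t p\mu$ read backwards), so we need $\rho = \nu q$ and then we absorb $\nu p$ into $\beta$: write $\beta + \nu p = \delta + \mu q$ with $\abs\delta<q$, then recurse on $U$ with the integer $\mu q$ pushed in from the right — but $\mu q$ need not lie in $p\Z$ unless... it does, since $q\in p\Z$. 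So we need $\mu q\in R(U)$, giving $U(\mu q)\gq U_{\mu q}\mu q$ after renaming, then $\alpha T U_{\mu q}(\mu q)\delta' t$... The cleaner route: define $R(\alpha T U\beta t)$ as the set of $\rho = \nu q\in q\Z\subseteq p\Z$ such that the induced inner shift lands in $R(U)$, and then absorb the leftover across $T$ using $\alpha T\,\sigma p\gq (\alpha+\sigma q)T$, i.e. $\abs\alpha<p$ gets enlarged but that is fine for the bound. The constant $c=3q-2$ in Eq.~\ref{const} is exactly what makes the recursion close: the intermediate integer that must lie in $R(U)$ has absolute value bounded by $p\cdot\frac{r\cdot s(U)+3q-2}{q}\le r\cdot s(U)$ by Eq.~\ref{const} (scaling Eq.~\ref{eq:otto} by $s(U)$), where the $3q-2$ accounts for the contributions $\abs\beta<q$, the new $\abs\delta<q$, and the carry. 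Since $s(V)=s(U)$ in this case, the final bound $\abs\rho\le r\cdot s(V)$ is what we get.

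For the linear-time claim, note that $R(V)$ has size $\Oh(s(V))$ by property~1 (it consists of multiples of $p$ in an interval of length $\Oh(s(V))$), and the recursion visits each node of the parse tree once, doing $\Oh(\abs{R(U)}+\abs{R(W)})$ arithmetic per internal node; since $\sum s(\text{node})$ telescopes appropriately along the grammar derivation and $s(V)\le 1+\Abs V$, the total is linear in $\Abs V$ (arithmetic on the binary integers counted as constant-time, as stipulated). One should also carry along, for each $\rho$, the data needed to reconstruct $V_\rho$ — but since the statement only asserts existence of $V_\rho$, it suffices to record for each $\rho\in R(V)$ which branch and which sub-choices $(\sigma,\tau)$ or $(\nu,\mu,\delta)$ produced it, which is $\Oh(1)$ per entry.

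I expect the main obstacle to be bookkeeping in the $\alpha T U\beta t$ case: one must track several carries ($\mu$, $\nu$, $\delta$, and the enlargement of $\alpha$) and confirm that \emph{every} $\rho\in p\Z$ with $\abs\rho\le r\cdot s(V)$ that one wishes to place in $R(V)$ really does factor through a legitimate $R(U)$-element, so that property~2 is not merely "some $\rho$ work" but genuinely covers a range large enough for the downstream application (the later sections use $R(V)$ as the state space of a dynamic program, so its being a full interval of $p\Z$, not a sparse set, matters). Getting the inductive inequality to be tight enough — i.e. choosing the recursive target range so that Eq.~\ref{const} applies with exactly the constant $3q-2$ and no slack is lost — is the crux; everything else is routine verification that standard valleys are closed under the operations used and that Britton-reducedness is preserved.
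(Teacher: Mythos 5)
Your overall architecture --- structural recursion over the grammar $S\to SS\mid\alpha T S\beta t\mid 0$, with $R(0)=\oneset{0}$, $R(UW)=R(U)+R(W)$, the bound closing via Eq.~\ref{const} because $s(UW)=s(U)+s(W)$ and $s(\alpha TU\beta t)=s(U)$, and $\abs{R(V)}\in\Oh(s(V))$ giving the time bound --- is exactly the paper's. But the case you yourself single out as the crux, $V=\alpha TU\beta t$, is carried out with the defining relation applied in the wrong direction, and the argument does not survive this. From $ta^pt^{-1}=a^q$ one gets $t\,\nu p\gq \nu q\,t$ and $\sigma p\,T\gq T\,\sigma q$: an integer crossing the trailing $t$ \emph{leftward} is multiplied by $q/p$, not by $p/q$. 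Your identities $\beta t\,\nu q\gq(\beta+\nu p)t$ and $\alpha T\,\sigma p\gq(\alpha+\sigma q)T$ are both false (for $p=1$, $q=2$ the first gives $ta^2\gq at$, whereas in fact $ta^2\gq a^4t$), and they lead you to the wrong conclusion that $\rho$ must lie in $q\Z$. The correct extraction goes outward: writing $\alpha=\eps p+\alpha'$ with $\abs{\alpha'}<p$ and $\abs\eps\le 1$, one sends $\eps q$ rightward across the leading $T$, commutes it (being a multiple of $p$) through the valley $U$ by Lemma~\ref{comval} --- the same lemma your concatenation case silently needs to move $\sigma$ past $W_\tau$ --- to get $V\gq\alpha' T\,U_\sigma(\sigma+\eps q+\beta)t$; then one writes $\sigma+\eps q+\beta=\mu q+\beta'$ with $\abs{\beta'}<q$ and pushes $\mu q$ rightward across the trailing $t$ to emerge as $\rho=\mu p$. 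Hence $R(V)$ consists of certain multiples of $p$ that are in general not multiples of $q$; a range contained in $q\Z$ would moreover starve the dynamic program of Theorem~\ref{whou}, where precisely the values $\mu p$ arising this way must be representable.

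Two further points in this case. Nothing is ever moved leftward across the leading $T$, and $\alpha$ must \emph{not} be ``enlarged'': $V_\rho$ has to be a \sv, so the coefficient in front of a $T$ must keep absolute value $<p$, which is why the decomposition $\alpha=\eps p+\alpha'$ is taken and $\alpha'$ is retained. Once the direction is fixed, your accounting of the constant $3q-2$ (namely $q$ for $\eps q$, $q-1$ for $\beta$, $q-1$ for $\beta'$) is correct, and your base and concatenation cases match the paper. So the gap is localized but real: as written, the nesting case neither defines $R(\alpha TU\beta t)$ correctly nor establishes property~2.) for it.
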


\begin{proof}
 
\begin{enumerate}[1.)]
\item For $V=0$ define $R(0)= \oneset{0}$.
\item Let $V=UW$ and $ U \neq 0 \neq W$.
We define 
$$R(V) = \set{\rho \in  \Z}{ \rho = \sigma + \tau, \sigma \in R(U),
\tau \in R(W)} \subseteq p \Z.$$
Clearly, $\rho\in R(V)$ implies 
$\abs \rho \leq  r \cdot (s(U) + s(W)) = r \cdot s(V)$. 

Let $\sigma \in R(U)$ and $\tau \in R(W)$.
As $\sigma \in p \Z$ we can write 
$$V \gq U_\sigma \sigma W \gq U_\sigma W_\tau(\sigma +\tau).$$ 
\item Let $V = \alp T U \bet t$. 
For $\sigma \in R(U)$ consider in a first step all values 
$\alp'$ with $\alp = \eps p + \alp'$ and $\abs{\alp'} < p$.
We know $\abs \eps \leq 1$. 
We can write 
$$V \gq \alp' T U_\sigma{(\sigma + \eps q+ \bet) t}.$$
Note that $\abs{\sigma + \eps q+ \bet}\leq 
r\cdot s(V) + 2q -1$. 
Thus $\sigma$ leads to some values $\bet'$ and hence some values $\rho$ 
such that
$\sigma + \eps q+ \bet = \mu q + \bet'$ and 
$\abs{ \bet'} < q$ and $\rho = \mu p$. 
Note that 
$$\abs \rho = \abs{\mu} p \leq p \cdot \frac{r\cdot s(V) + 3q -2}{q}
\leq r\cdot s(V).$$
We define $R(V)$ to be the set of all $\rho$ which are possible outcomes for some $\sigma \in R(V)$.

Of course, we can write 
$$V \gq \alp' T U_\sigma{(\sigma + \eps q+ \bet) t}
\gq \alp' T U_\sigma \bet' t \rho.$$
\end{enumerate}
\end{proof}

\begin{theorem}\label{whou} Let $ p \mid q$.
We can design a quadratic time algorithm running in time $\Oh(s(V)({\Abs V}+s(V)))$ which solves the following problem.

{\textbf {Input:} }A \sv $V$

{\textbf {Problem:} }Compute for all $\rho \in R(V)$  words
$V_\rho$ in \peaknf such that the following condition is satisfied. 

For all $w$, where $tw$ is \Breduced, it holds: 
\begin{equation}\label{nuwirdsgut}
\pnf{Vw}= \min\set{V_\rho \pnf{\rho w}}{\rho \in R(V)}
\end{equation}
\end{theorem}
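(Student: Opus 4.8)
The plan is to proceed by structural induction on the standard valley $V$, following the grammar $S\to SS\mid \alp TS\bet t\mid 0$ and the recursion for $s(V)$, and to mirror the construction of $R(V)$ in Lemma~\ref{lem:range}. The base case $V=0$ is trivial: $R(0)=\oneset{0}$, and we set $V_0 = 0$, so that Eq.~\ref{nuwirdsgut} reads $\pnf{w}=\pnf{w}$. For the inductive cases we maintain, exactly as in the proof of Proposition~\ref{fritz} and Theorem~\ref{thm:otto}, a table indexed by $R(V)$ which stores each $V_\rho$ only up to a bounded-length suffix $s_\rho$ together with the length differences $\abs{V_\rho}-\abs{V_0}$ and the lexicographical order of the common-length prefixes; by Lemma~\ref{lem:range}.1 we have $\abs\rho\le r\cdot s(V)$, so these length differences lie in a range of size $\Oh(s(V))$ and the table has $\Oh(s(V))$ entries of size $\Oh(s(V))$ each.

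First I would handle the concatenation case $V=UW$ with $U\neq0\neq W$. By induction we have computed the families $(U_\sigma)_{\sigma\in R(U)}$ and $(W_\tau)_{\tau\in R(W)}$ satisfying Eq.~\ref{nuwirdsgut}. Given $\rho=\sigma+\tau\in R(V)$, I want $V_\rho$ to be the $\lldel$-minimal word among the $U_\sigma W_\tau$ with $\sigma+\tau=\rho$ that is consistent with being the relevant prefix. Using that for any $w'$ with $tw'$ \Breduced we have $\pnf{Uw'}=\min\set{U_\sigma\pnf{\sigma w'}}{\sigma\in R(U)}$, applying this with $w' = Ww$ and noting $\sigma\in p\Z$ so that $\sigma$ commutes to the right past the valley $W$ by Lemma~\ref{comval} (really $\pnf{\sigma Ww}$ is handled through $W_\tau$), I obtain $\pnf{Vw} = \pnf{U(Ww)} = \min_\sigma U_\sigma\pnf{\sigma Ww} = \min_\sigma U_\sigma\min_\tau W_\tau\pnf{(\sigma+\tau)w}$, which regroups to $\min_{\rho\in R(V)}\bigl(\min_{\sigma+\tau=\rho} U_\sigma W_\tau\bigr)\pnf{\rho w}$; so I set $V_\rho$ to that inner minimum. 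The minimization over the $\Oh(s(U))\cdot\Oh(s(W))$ pairs, using the stored suffix/prefix-order data, costs $\Oh(s(U)s(W))$, and concatenating stored words of length $\Oh(\abs U)$ and $\Oh(\abs W)$ accounts for the $\Abs V$ term. For the loop case $V=\alp TU\bet t$: by induction I have $(U_\sigma)_{\sigma\in R(U)}$. For each $\sigma$, the proof of Lemma~\ref{lem:range} produces $\alp'$ (with $\abs{\alp'}<p$), $\bet'$ (with $\abs{\bet'}<q$) and $\rho=\mu p$ via $\sigma+\eps q+\bet=\mu q+\bet'$, and $V\gq\alp' TU_\sigma\bet' t\rho$. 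Since the peak of $Vw$ lies strictly inside (as $tw$ is \Breduced, the last letter of $V$ is $t$), I argue as in Theorem~\ref{thm:otto} that $\pnf{Vw}=\min\set{\alp' T\,\pnf{U(\bet' t\rho w)}}{\cdots}$; then $\pnf{U(\bet' t\rho w)} = \min_\sigma U_\sigma\pnf{\sigma\bet' t\rho w}$, and the key point is that $\sigma\in p\Z$ lets $\sigma$ move through the letter $T$ at the cost of $\eps q$, so $T\sigma\gq(\eps q)T\sigma'$-type rewrites collapse everything into a word of the form $\alp' TU_\sigma\bet' t$ followed by $\pnf{\rho w}$. Taking the $\lldel$-minimum over all admissible $(\sigma,\eps,\alp')$ for a fixed $\rho$ defines $V_\rho$, and this is the required $\pnf{}$ because $\alp' TU_\sigma\bet' t$ is itself a standard valley and hence (by the inductive form of the $U_\sigma$) the chosen representative is geodesic and in \peaknf.

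The main obstacle, I expect, is the bookkeeping that makes each minimization and table update run in time proportional to $s(V)$ rather than $\Abs V$ — i.e. verifying that storing only the bounded suffixes $s_\rho$, the $\Oh(s(V))$-bounded length differences, and the linear order on the prefixes really suffices to (a) compare words like $\alp' TU_\sigma\bet' t$ across different $\sigma$ in constant time per pair, (b) recover the full $V_\rho$ at the end by a single right-to-left scan following back-pointers, and (c) prove that the minimum length among $\set{\abs{V_\rho}}{\rho\in R(V)}$ genuinely increases (or the prefixes stay coherent) as the recursion proceeds, so that suffixes never need to be longer than $\Oh(s(V))$. This is exactly the technical heart that Proposition~\ref{fritz} and Theorem~\ref{thm:otto} set up, and here it must be redone with the length bounds scaled by $s(V)$; the arithmetic that $\abs\rho\le r\cdot s(V)$ stays within $p\Z$ and within the stated range is precisely Lemma~\ref{lem:range}, which I may invoke. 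A secondary point to check carefully is that in every case the produced $V_\rho$ is in \peaknf and that $V_\rho\pnf{\rho w}$ is \Breduced (which follows since a non-trivial standard valley ends in $t$ and $tw$ is \Breduced, while $\rho\in p\Z$ with $\abs\rho$ possibly large is absorbed into $\pnf{\rho w}$ exactly as $\gamma$ was handled after Lemma~\ref{lem:hugo}). Summing the recursion, the concatenation case contributes $\sum s(U)s(W)\le s(V)^2$ and the loop case contributes $\Oh(s(V)\Abs V)$ for word concatenations, giving the claimed $\Oh(s(V)(\Abs V+s(V)))$ total.
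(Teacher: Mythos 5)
Your proposal follows essentially the same route as the paper's own proof: structural induction on the grammar $S\to SS\mid \alp T S\bet t\mid 0$, with the identical recursive formulas $V_\rho=\min\set{U_\sigma W_\tau}{\sigma+\tau=\rho}$ in the concatenation case and $V_\rho=\min\oneset{\alp' T U_\sigma \bet' t}$ in the loop case, the same appeal to Lemma~\ref{comval} and Lemma~\ref{lem:range}, and the same suffix/length-difference/prefix-order bookkeeping to keep the minimizations within the stated time bound. The technical points you flag as obstacles are exactly the ones the paper itself treats informally, so the proposal is correct and matches the intended argument.
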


\begin{proof}
Note that for \sv{}s the \llex ordering coincides 
with ordering in \peaknf. Thus, if we wish 
to test later whether $\pnf{U_\sigma} < \pnf{U_\rho}$,
it is enough to remember the \llex ordering between prefixes
of the same length, and in case they are equal we can compare 
suffixes. During the proof we will point out where this is used.

\begin{enumerate}[1.)]
\item  For $V=0$ we let $V_0= 0$.
\item Let $V=UW$ and $ U \neq 0 \neq W$.
We have
\begin{align*}
\pnf{UWw} &= \pnf{U_\sigma}  \pnf{\sigma W w}\\
&= \pnf{U_\sigma}  \pnf{ W \sigma w}
\\
&= \pnf{U_\sigma} \pnf{W_\tau} \pnf{(\sigma + \tau) w}
\end{align*}
for some $\sigma\in R(U)$ and $\tau\in R(W)$. 
Thus, for each $\rho$ we have:
$$V_\rho = \min\set{U_\sigma W_\tau}{\rho = \sigma + \tau, \sigma \in R(U), \tau \in R(W)}.$$ 

The minimum search is a little tricky, because $U_\sigma$'s
may have different length so that we are forced to scan through
the word $W_\tau$. The overall time we need for these comparisons 
can be bounded however by $\Oh(s(U){\Abs W})$ by similar techniques 
as used above in other proofs. (For polynomial time results
such a tuning is not necessary, and can be omitted.) 
For creating  smaller tables we used inductively the time 
$\Oh(s(U)({\Abs U}+s(U)) + s(W)({\Abs W}+s(W)).$
We need another term $\Oh(s(U)s(W))$
for computing the length $\abs{\pnf{V_\rho}}$ and, in any case, 
we consider all $\sigma \in R(U)$ and  all $\tau \in R(V)$.

The time we need to do this for all $\rho \in R(V)$ is bounded
by $$\Oh(s(U){\Abs W} + s(U)({\Abs U}+s(U)) + s(W)({\Abs W}+s(W)) + s(U)s(W)).$$
This is within our time bound, because 
\begin{align*}
s(U){\Abs W} + s(U)&({\Abs U}+s(U)) + s(W)({\Abs W}+s(W)) + s(U)s(W)\\ 
&\leq (s(U) +s(W))({\Abs U} + \Abs{W}+ s(U) +s(W))\\
&= s(V)({\Abs V}+s(V)).
\end{align*}

\item Let $V = \alp T U \bet t$. 
We have 
\begin{align*}
\pnf{\alp T U \bet tw } 
&= \alp' T \pnf{U(\eps q + \bet) tw}\\
&= \alp' T U_\sigma\pnf{(\sigma  + \eps q + \bet) tw}
\end{align*}
for some $\sigma \in R(U)$ and some
$\alp'$ with $\alp = \eps p + \alp'$ and $\abs{\alp'} \leq p$ and $\abs \eps \leq 1$. 

Now, 
$$\pnf{(\sigma + \eps q + \bet) tw)} = \bet' t 
\pnf{\rho w}$$
for some  $\bet'$ and  $\rho$ 
such that
$\sigma + \eps q+ \bet = \mu q + \bet'$ and 
$\abs{ \bet'} < q$ and $\rho = \mu p$. 

Thus, for each $\rho$ we may define:
$$V_\rho = \min \oneset{\alp' T U_\sigma \bet' t}$$
where the minimum is taken over all 
$\sigma \in R(U)$,
$\abs{\alp'} \leq p$, and  $\abs{ \bet'} < q$ 
satisfying: 
$$
\alp = \eps p + \alp' \text{ and }
\sigma + \eps q+ \bet = \mu q + \bet' \text{ and }
 \rho = \mu p.
$$
The time we need to do  this for all $\rho \in R(V)$ is bounded
by $$\Oh(1 + s(U)({\Abs U}+s(U)) + s(V)).$$
This is within our time bound, because $s(U)= s(V)$ 
and ${\Abs V} \geq 2 + {\Abs U}$. 
\end{enumerate}
\end{proof}

\begin{corollary}\label{gunnar}Let $ p \mid q$.
We can design a quadratic time algorithm running in time 
$\Oh(s(V)({\Abs V}+s(V))$ which solves the following problem.

{\textbf {Input:} }A valley $v$

{\textbf {Problem:} }Compute the 
\peaknf $\pnf{v}$.
\end{corollary}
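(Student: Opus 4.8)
The plan is to reduce the computation of $\pnf v$ to the problem already solved in Theorem~\ref{whou} and to finish off with the horocyclic algorithm of Corollary~\ref{cor:horopolytime}. First I would, if necessary, compute a \Breduced form of $v$: this takes linear time and does not increase the height, the norm, or the number of sinks, and it keeps the word a valley, so we may assume $v$ is a \Breduced valley. By Lemma~\ref{lem:hugo} we obtain in linear time a \sv $V$ and an integer $\gamma$ with $v \gq V\gamma$. Since the \peaknf of a word depends only on the group element it represents, $\pnf v = \pnf{V\gamma}$, so it remains to compute $\pnf{V\gamma}$ in time $\Oh(s(V)(\Abs V+s(V)))$. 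Here $s(V) = s(v)$, because $v$ and $V\gamma$ are \Breduced representatives of the same element and hence carry the same $t$-sequence, and appending the integer $\gamma$ does not alter the $t$-sequence of $V$; similarly $\Abs V\le\Abs v$, so the stated bound is also quadratic in the parameters of $v$.

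Next I would invoke Theorem~\ref{whou} on $V$, obtaining in time $\Oh(s(V)(\Abs V+s(V)))$ the words $V_\rho$ in \peaknf for all $\rho\in R(V)$. The word $w = a^\gamma$ meets the hypothesis of Equation~\eqref{nuwirdsgut}, because $ta^\gamma$ is \Breduced, so
$$\pnf{V\gamma} = \min\set{V_\rho\,\pnf{\rho\gamma}}{\rho\in R(V)}.$$
For each $\rho\in R(V)$ the product $\rho\gamma$ is the integer $\rho+\gamma$, hence horocyclic, and $\pnf{\rho\gamma} = \llnf{a^{\rho+\gamma}}$ because the \peaknf and the \llex normal form coincide on horocyclic elements; by Corollary~\ref{cor:horopolytime} this is computed in linear time. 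As $\abs{R(V)}\in\Oh(s(V))$, $\abs\rho\le r\cdot s(V)$, and $\abs{\llnf{a^{\rho+\gamma}}} = \Abs{\rho+\gamma}\in\Oh(\Abs v+s(V))$, the whole batch of these $\Oh(s(V))$ horocyclic completions stays within the budget.

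Finally I would perform the minimum search over the $\Oh(s(V))$ candidates $V_\rho\,\pnf{\rho\gamma}$: each of them has length $\Oh(\Abs V+s(V))$, so a direct scan-and-compare costs $\Oh(s(V)(\Abs V+s(V)))$, and the careful suffix-and-ordering bookkeeping of Theorem~\ref{whou} is not needed, since here the search is carried out only once rather than inside an induction. Summing the three phases (the \Breduction and Lemma~\ref{lem:hugo} in linear time, Theorem~\ref{whou} in quadratic time, and the horocyclic completions together with the minimum search in quadratic time) yields the claimed running time. The real content sits in Theorem~\ref{whou}; for this corollary the only point that needs care is the bookkeeping of the first paragraph, namely that the passage from $v$ to the \sv $V$ preserves $s(\cdot)$ and does not increase the norm, so that the bound inherited from Theorem~\ref{whou} is exactly the one claimed.
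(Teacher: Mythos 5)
Your proposal is correct and follows essentially the same route as the paper: Britton-reduce, apply Lemma~\ref{lem:hugo} to pass to a standard valley $V$ with $v\gq V\gamma$, invoke Theorem~\ref{whou} with the horocyclic completion $w=\gamma$ (noting $t\gamma$ is Britton-reduced), and compute each $\llnf{\rho+\gamma}$ by the horocyclic algorithm before a final minimum search. Your extra bookkeeping (that $s$ and the norm are controlled under the passage to $V$, and that peak and length-lexicographical normal forms agree on the horocyclic tails) only makes explicit what the paper leaves implicit.
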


\begin{proof}
The \Breduction of a valley is a valley and can be computed in linear
time, keeping the integers in binary notation. 
Again in linear time we find a \sv $V$ and $\gamma \in \Z$ 
such that $v\gq V \gamma$. 
Theorem~\ref{whou} yields 
$$ \pnf{v} = \min \set { V_\rho \llnf{\rho + \gamma}}{\rho \in R(V)}.$$
By Proposition~\ref{fritz} $\llnf{\rho + \gamma}$ can be computed in linear time. 
 
The final step takes time $\Oh(\Abs{V} s(V))$ because $\abs{R(V)} \in \Oh(s(V))$.
\end{proof}

\subsection{Proof of Theorem~\ref{thm:main}}\label{thisistheendmyfriend}
We are now in a position to complete the proof of the main Theorem~\ref{thm:main}. Let $w\in\oneset{a,A,t,T}^\ast$ be the input word. First we rewrite $w$ as a word in $\FP$ and compute its \Breduction $u$ in linear time. We have $\Abs{u} \leq \Abs{w}$ and $s(u) \leq s(w)$. 
Next, $u$ is partitioned into $\alp_1t\cdots\alp_ktDT\bet_1\cdots T\bet_m$, where $D$ is horocyclic or \di. Again, we have 
$\Abs{D} \leq \Abs{u}$ and $s(D) \leq s(u)$.
In any case, the peak of $u$ is inside $D$, so, using the reduction given in Theorem~\ref{thm:otto}, 
 it is sufficient to compute the \peaknf of 
a constant number of horocyclic or \di words $D'$ 
with $\Abs{D'} \leq \Abs{D}+r$ and $s(D') = s(D)$ and $r$ is a constant.
{}From that we get the \peaknf of $u$ in linear time. 

If $D'$ is horocyclic, the \peaknf equals the \llex normal form which can be found in linear time, as demonstrated in Proposition~\ref{fritz}. 

For the case where $D'$ is \di, we have shown in Section~\ref{sec:rtov}, that we can reduce the computation of the \peaknf 
to the computation of two valleys. The reduction takes linear time. Finally {\peaknf}s for all valleys under consideration  can be found in quadratic time 
$\Oh(s(w)({\Abs w}+s(w))$
using Corollary~\ref{gunnar}. \hfill$\square$

\section*{Conclusion}\label{sec:c}
We have seen how to compute the geodesic \peaknf 
in quadratic time in the case $ p \mid q$. Actually, we have shown that 
the uniform problem is decidable in polynomial time. The uniform 
problem takes as input a word $ w \in \oneset{t,t^{-1},a,a^{-1}}^*$ and two
integers $p$ and $q$ written in unary with $ p \mid q$.
As the  values $r$ defined in Eqs.~\ref{eq:fred} and~\ref{const}
are polynomial in $q$, our quadratic time non-unifom algorithm yields
a polynomial time uniform algorithm. 

For the general case, where $p$ does not divide $q$, it remains
to compute geodesics for \di words, i.e., those, for which the
$t$-sequence of the \Breduction starts with $t^{-1}$ and ends with  $t$.

It is open whether the remaining problem is in P or whether it is
Co-NP complete or whether the complexity is somewhere in between. 

\addcontentsline{toc}{section}{References}\nopagebreak

\begin{thebibliography}{1}

\bibitem{baumslag62some}
G.~Baumslag and D.~Solitar.
\newblock {Some two-generator one-relator non-Hopfian groups}.
\newblock {\em Bull. Amer. Math. Soc.}, 68:199--201, 1962.

\bibitem{ddm09}
V.~Diekert, A.~J. Duncan, and A.~Miasnikov.
\newblock Geodesic rewriting systems and pregroups.
\newblock In O.~Bogopolski, I.~Bumagin, O.~Kharlampovich, and E.~Ventura,
  editors, {\em Combinatorial and Geometric Group Theory}, Trends in
  Mathematics. Birkh\"auser, 2009.
\newblock To Appear.

\bibitem{elder09linear}
M.~Elder.
\newblock {A linear-time algorithm to compute geodesics in solvable
  {B}aumslag-{S}olitar groups}, 2009.
\newblock Preprint, arXiv.org:0903.0216.

\bibitem{elder09some}
M.~Elder and A.~Rechnitzer.
\newblock {Some geodesic problems in groups}, 2009.
\newblock Preprint, arXiv.org:0907.3258.

\bibitem{fredenadams09}
E.~Freden and J.~Adams.
\newblock {A context-sensitive combing associated with {B}aumslag-{S}olitar
  2,7}, 2009.
\newblock Preprint.

\bibitem{fcs09}
E.~Freden, T.~Cawley, and J.~Schofield.
\newblock {Growth in {B}aumslag-{S}olitar groups {I}: {S}ubgroups and
  rationality}, 2009.
\newblock Preprint,.

\bibitem{jan88eatcs}
M.~Jantzen.
\newblock {\em Confluent String Rewriting}, volume~14 of {\em EATCS Monographs
  on Theoretical Computer Science}.
\newblock Springer-Verlag, 1988.

\bibitem{MR42:4343}
W.~Kuich.
\newblock On the entropy of context-free languages.
\newblock {\em Information and Control}, 16:173--200, 1970.

\bibitem{ls77}
R.~E. Lyndon and P.~E. Schupp.
\newblock {\em Combinatorial group theory}.
\newblock Springer-Verlag, Heidelberg, 1977.

\end{thebibliography}

\newcommand{\Ju}{Ju}\newcommand{\Ph}{Ph}\newcommand{\Th}{Th}\newcommand{\Yu}{Y%
u}\newcommand{\Zh}{Zh}

\newpage
\section*{Appendix}

\subsection*{Example: Computing \llexnf{}s of horocyclic elements}\label{ex:horo}
We give an example to illustrate how our algorithm for finding the \llexnf of horocyclic elements works. Suppose we have $p=1$, $q=3$ and we want to find the \llexnf of the word (of length 63) 
\begin{align*}
w & =a^{7}ta^{14}t^{-1}a^{-2}t^{2}a^{9}t^{-1}a^{2}t^{-1}a^{23} \\
& = a^{7}ta^{14}TA^{2}t^{2}a^{9}Ta^{2}Ta^{23}\\ & =
7t14T{-2}tt9T2T23.
\end{align*}

We proceed in several steps. 

\subsubsection*{Precomputation and finding a suitable slope}
Three \Breduction steps yield the integer $w\gq 157$. The greedy algorithm of Proposition \ref{fritzchen} gives $w\gq t52T1\gq t^217T1T1\gq t^35T2T1T1$, hence $\llnf{w}=t^3\llnf{u}$, for the slope $u=5T2T1T1$. 

\subsubsection*{Computing the \llexnf for a slope}
We now regard the slope $u=5T2T1T1$. Since $p=1$ and $q=3$, we have $r=4$. We use the algorithm described in the proof of Proposition~\ref{fritz}. Table~\ref{tab:horo1} shows the intermediate results of the unoptimized algorithm as presented in the first half of the proof. Note that the algorithm never has to store more than two columns of the table at the same time. So, due to the length of the entries, linear space is needed. 

\begin{table}[ht]
$$\begin{array}{r|l|l|l|l|}
\rho&\llnf{\rho}&\llnf{u(1,\rho)}&\llnf{u(2,\rho)}&\llnf{u(3,\rho)}\cr\hline
-9&tt{-1}TT&\cellcolor{BSgray}&\cellcolor{BSgray}&\cellcolor{BSgray}\cr\hline
\vdots&&\cellcolor{BSgray}&\cellcolor{BSgray}&\cellcolor{BSgray}\cr\hline
-4&t{-1}T{-1}&t1TT2&t1T2TT2&t2TT{-1}TT{-1}\cr\hline
-3&t{-1}T&t1T1T&t1T2T1T&t2TT{-1}TT\cr\hline
-2&{-2}&t1T1T1&t1T2T1T1&t2TT{-1}TT1\cr\hline
-1&{-1}&t1T1T2&t2TT{-1}T{-1}&t2TTT{-2}T{-1}\cr\hline
0&0&t1T2T&t2TT{-1}T&t2TTT{-2}T\cr\hline
1&1&t1T2T1&t2TTT{-2}&t2TTT{-1}T{-2}\cr\hline
2&2&t2TT{-1}&t2TTT{-1}&t2TTT{-1}T{-1}\cr\hline
3&t1T&t2TT&t2TTT&t2TTT{-1}T\cr\hline
4&t1T1&t2TT1&t2TTT1&t2TTTT{-2}\cr\hline
5&t1T2&\cellcolor{BSgray}&\cellcolor{BSgray}&\cellcolor{BSgray}\cr\hline
6&t2T&\cellcolor{BSgray}&\cellcolor{BSgray}&\cellcolor{BSgray}\cr\hline
7&t2T1&\cellcolor{BSgray}&\cellcolor{BSgray}&\cellcolor{BSgray}\cr\hline
8&tt1TT{-1}&\cellcolor{BSgray}&\cellcolor{BSgray}&\cellcolor{BSgray}\cr\hline
9&tt1TT&\cellcolor{BSgray}&\cellcolor{BSgray}&\cellcolor{BSgray}\cr\hline
\end{array}$$
\label{tab:horo1}
\caption{Intermediate resulte during the computation of $\llnf{5T2T1T1}$}
\end{table}

Now let's consider the optimized variant from the second part fo the proof. The computational steps are essentially the same, but only a constant amount of data is stored. For example for $i=1$ we have $\abs{p_{1,\rho}}=\min\{\llnf{u(1,\tau)}:\tau\in R\}=5$, so after having computed the column $\llnf{u(1,\rho)}$ in Table~\ref{tab:horo1}, we can cut off the first 5 letters of every entry, save their lexicographical order as an order on $R$ ($-4<-3=-2=-1<0=1<2=3=4$), and output them. 
The complete output of the algorithm is shown in Table~\ref{tab:horo2}. 

\begin{table}[ht]
$$\begin{array}{r|l|ll|ll|}
\rho&i=1&i=2&&i=3&\cr\hline
-4&t1TT1&T,&0&TT{-1},&0\cr\hline
-3&t1T1T&T,&1&TT,&0\cr\hline
-2&t1T1T&T,&1&TT1,&0\cr\hline
-1&t1T1T&{-1},&2&{-2}T{-1},&1\cr\hline
0&t1T2&{-1},&2&{-2}T,&1\cr\hline
1&t1T2&T,&3&{-1}T{-2},&2\cr\hline
2&t2TT&T,&3&{-1}T{-1},&2\cr\hline
3&t2TT&T,&3&{-1}T,&2\cr\hline
4&t2TT&T,&3&T{-2},&3\cr\hline
\end{array}$$
\label{tab:horo2}
\caption{Output of the optimized algorithm for $\llnf{5T2T1T1}$}
\end{table}

Following the pointers and reading off the result, starting at $(\rho,i)=(1,3)$, gives $\llnf{u}=t2TTT{-1}T{-2}$. Thus, we have
$$\llnf{w}=t^3\llnf{u}=t^42TTT{-1}T{-2} = t^4a^2t^{-3}a^{-1}t^{-1}a1^{-2}.$$


\end{document}